\theoremstyle{plain}
\newtheorem{theorem}{Theorem}[section]
\newtheorem{lemma}[theorem]{Lemma}
\newtheorem{proposition}[theorem]{Proposition}
\newtheorem{corollary}[theorem]{Corollary}
\newtheorem{claim}{Claim}[theorem]
\newcommand{\mcal}[1]{\ensuremath{\mathcal{#1}}}
\newcommand{\mbb}[1]{\ensuremath{\mathbb{#1}}}
\newcommand{\fr}[1]{\ensuremath{\mathrm{Fr}(#1)}}
\newcommand{\var}[1]{\ensuremath{\mathrm{Var}(#1)}}
\newcommand{\amal}[2]{\ensuremath{\mathrm{Amal}(#1,#2)}}
\newcommand{\dash}{\nobreakdash-\hspace{0pt}}
\newcommand{\cl}{\operatorname{cl}}
\newcommand{\mso}{\ensuremath{\mathit{MS}_{0}}}
\title[The ``missing axiom" of matroid theory]
{Yes, the ``missing axiom" of matroid theory is lost forever}
\author[Mayhew]{Dillon Mayhew}
\address{School of Mathematics and Statistics,
Victoria University of Wellington,
New Zealand}
\email{dillon.mayhew@vuw.ac.nz}
\author[Newman]{Mike Newman}
\address{Department of Mathematics and Statistics\\
University of Ottawa\\
Ottawa\\
Canada}
\email{mnewman@uottawa.ca}
\author[Whittle]{Geoff Whittle}
\address{School of Mathematics and Statistics,
Victoria University of Wellington,
New Zealand}
\email{geoff.whittle@vuw.ac.nz}
\date{\today}
\begin{document}

\begin{abstract}
We prove there is no sentence in the monadic second-order language
\mso\ that characterises when a matroid is representable over at least
one field, and no sentence that characterises when a matroid is
\mbb{K}\dash representable, for any infinite field \mbb{K}.
By way of contrast, because Rota's Conjecture is true,
there is a sentence that characterises \mbb{F}\dash representable
matroids, for any finite field \mbb{F}.
\end{abstract}

\maketitle

\section{Introduction}

A matroid captures the notion of a discrete
collection of points in space.
Sometimes these points can be assigned coordinates
in a consistent way, and sometimes they cannot.
The problem of characterising when a matroid is
\emph{representable} has been the prime motivating
force in matroid research since Whitney's
founding paper \cite{Whi35}.

Plenty of effort has been invested in
characterising matroid representability
via excluded minors.
Less attention has been paid to the prospect of
characterisating representability via axioms.
Perhaps this is because of V\'{a}mos's
well-known article \cite{Vam78}, which has been
interpreted as stating that
no such characterisation exists (see \cite{Gee08}).
In \cite{MNW14}, we pointed out that the
possibility of characterising representable matroids in the
language of Whitney's axioms was still open;
that, in other words, we still did not know if
``the missing axiom of matroid theory is lost forever", \emph{contra}
V\'{a}mos's title.
We conjectured that in fact there was no such characterisation,
and we made some partial progress towards resolving the
conjecture by showing that it was impossible to
characterise the class of representable matroids, or the
class of matroids representable over an infinite field,
using a logical language based on the rank function.
However, that language imposed quite strong constraints on the
form of quantification.
In this article, we present a language with no such
constraints, and we prove that it is impossible
to characterise representability or
representability over an infinite field in this
more natural language.
This is not to say that representability cannot be characterised
in stronger languages: indeed, any language will suffice if it is
strong enough to express the statement that the independent sets
are in correspondence with the linearly independent sets of
columns in a matrix.

The language that we develop here is a form of monadic second-order
logic for matroids (similar to that used by Hlin\v{e}n\'{y} \cite{Hli03b}),
which we denote \mso.
As we show in \Cref{mso}, \mso\ is expressive enough to state
the matroid axioms, and to state when a matroid contains an isomorphic copy
of a fixed minor.
This means that any minor-closed class of matroids can be characterised with
an \mso\ sentence, as long as it has a finite number of excluded minors.
In particular, since Rota's Conjecture has been positively resolved by
Geelen, Gerards, and Whittle
(see \cite{GGW14}), it follows that the class of
$\mathbb{F}$\dash representable matroids can be characterised by a
sentence in \mso\ whenever $\mathbb{F}$ is a finite field.
Our main results show that this is not the case for infinite fields.
Nor is it possible to characterise the matroids that are representable
over at least one field using an \mso\ sentence.
When we say that a matroid is \emph{representable} we mean it is
representable over at least one field.

\begin{theorem}
\label{jungle}
There is no sentence, $\psi$, in \mso, such that a matroid is representable
if and only if it satisfies $\psi$.
\end{theorem}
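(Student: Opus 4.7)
The plan is to prove \Cref{jungle} via an Ehrenfeucht--Fra\"\i ss\'e (EF) argument tailored to \mso. The standard reduction says: to show that no single \mso\ sentence characterises a class $\mcal{C}$ of matroids, it suffices to exhibit, for every positive integer $k$, a pair $(M_k, N_k)$ with $M_k \in \mcal{C}$ and $N_k \notin \mcal{C}$ on which Duplicator wins the $k$-round \mso\ EF game, since Duplicator's strategy forces $M_k$ and $N_k$ to satisfy the same \mso-sentences of quantifier rank at most $k$. Any candidate sentence $\psi$ defining representability has some fixed quantifier rank $k_0$, so the pair $(M_{k_0}, N_{k_0})$ would already witness that $\psi$ fails. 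The task therefore reduces to constructing, for every $k$, a representable matroid and a non-representable matroid that are \mso-equivalent up to quantifier rank $k$.

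For the construction, I would start from a small template pair: a non-representable matroid $N$ (V\'amos's matroid being the canonical candidate) and a representable matroid $M$ sharing as much local combinatorial structure with $N$ as possible. I would then inflate $M$ and $N$ into large matroids $M_k$ and $N_k$ by amalgamating many copies along a common substructure, with the number of copies growing with $k$. The key technical lemma should be a Feferman--Vaught-style composition theorem for \mso\ over such amalgams: once the number of copies is sufficiently large relative to $k$, Duplicator can permute or pair up copies across the two sides after each of Spoiler's moves, so that whichever set or element Spoiler picks on one side can be matched by an \mso-equivalent choice on the other. In parallel, one must verify that $M_k$ remains representable over some field while $N_k$ remains non-representable over every field, no matter how many copies are amalgamated.

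The main obstacle is reconciling \mso-indistinguishability with the persistence of non-representability. Monadic second-order quantifiers range over arbitrary subsets, so any construction meant to blur the distinction between $M_k$ and $N_k$ must be robust against set-valued probes, not just element-valued ones; this is precisely the jump in expressive power that prevented the weaker, rank-based argument of \cite{MNW} from applying here. At the same time, non-representability is a global algebraic condition, easily destroyed by operations that delete, contract, or merge elements, so the inflation operation must be chosen delicately enough to preserve the obstruction \emph{over every field}. Showing that a single amalgamation scheme simultaneously preserves universal non-representability on the $N_k$ side, preserves representability on the $M_k$ side, and admits the composition argument needed to run the \mso\ EF game, is where the substantial work will lie.
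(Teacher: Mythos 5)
Your general reduction (exhibit, for each $k$, a representable $M_{k}$ and a non-representable $N_{k}$ that agree on all \mso\ sentences of quantifier rank at most $k$) is sound in principle, but the concrete construction you propose cannot deliver such pairs, and this is where the proposal breaks. As shown in \Cref{mso}, for any \emph{fixed} matroid $N$ there is a single \mso\ sentence, with a bounded number of variables independent of everything else, asserting that a matroid has a minor isomorphic to $N$. If you build $N_{k}$ by amalgamating copies of the V\'amos matroid along a common restriction, each copy survives as a restriction (hence a minor) of $N_{k}$, so $N_{k}$ satisfies the fixed sentence ``has a V\'amos minor''; meanwhile $M_{k}$, being representable, cannot contain a V\'amos minor, so it falsifies that same sentence. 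Consequently, once $k$ exceeds the (small, fixed) number of variables in that sentence, Spoiler wins and your pairs are not \mso\dash $k$\dash equivalent. The same objection applies to any inflation scheme whose non-representability is certified by a bounded-size non-representable minor: for the EF strategy to have a chance, the minimal non-representable minors of $N_{k}$ must grow with $k$, which is precisely why witnesses like pairs of projective planes over distinct primes, or the hoop/loop amalgams of \Cref{gain}, are needed. You gesture at the remaining difficulties (a Feferman--Vaught-style composition lemma for the amalgam, preservation of representability and of universal non-representability), but none of these is carried out, so even granting a better seed the argument is a plan rather than a proof.

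For comparison, the paper avoids having to construct representable/non-representable indistinguishable pairs at all. \Cref{iguana} gives, for each $k$, a \emph{finite} partition of all matroids such that same-block matroids $M,M'$ are interchangeable inside any direct sum $M\oplus M''$ with respect to $k$\dash variable sentences (a Myhill--Nerode-style argument via trees of registries, which is exactly the composition theorem for direct sums that your sketch would need). Pigeonholing the infinitely many projective planes $\mathrm{PG}(2,p)$ into finitely many blocks yields two \emph{representable} matroids $\mathrm{PG}(2,p)$ and $\mathrm{PG}(2,p')$, $p\neq p'$, in the same block; summing each with $\mathrm{PG}(2,p)$ then produces one representable and one non-representable matroid that no $k$\dash variable sentence can separate. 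The contrast in representability is created by the sum, not by the indistinguishable pair itself, which is what sidesteps the minor-detection obstruction that sinks the V\'amos-based inflation. If you wish to salvage your route, recasting the paper's direct-sum composition in EF/type language and applying it to the projective-plane family would work, but that is essentially the paper's proof rather than the construction you describe.
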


\begin{theorem}
\label{proton}
Let $\mathbb{K}$ be any infinite field.
There is no sentence, $\psi_{\mbb{K}}$, in \mso, such that a matroid is
$\mathbb{K}$\dash representable if and only if it satisfies $\psi_{\mbb{K}}$.
\end{theorem}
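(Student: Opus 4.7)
The plan is a standard Ehrenfeucht--Fra\"iss\'e (EF) argument adapted to \mso\ on matroids. Suppose for contradiction that $\psi_{\mathbb{K}}$ characterises $\mathbb{K}$-representability, and let $q$ be its quantifier rank. It suffices to exhibit, for every fixed $q$, a pair of matroids $M$ and $N$ such that (i) $M$ is $\mathbb{K}$-representable while $N$ is not, and (ii) duplicator has a winning strategy in the $q$-round \mso\ EF game on $(M,N)$. Condition (ii) implies $M$ and $N$ satisfy exactly the same \mso\ sentences of rank $\le q$, giving the contradiction $M\models\psi_{\mathbb{K}}\iff N\models\psi_{\mathbb{K}}$.

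I would build $M$ and $N$ by an amalgamation construction: take a large ``frame'' matroid and attach many copies of a small ``gadget'' matroid, with the macros \amal{\cdot}{\cdot} and \fr{\cdot} suggesting that the paper's amalgamation and Frucht-type frame machinery is exactly what is needed. The gadget is chosen so that there are two variants, $G_0$ and $G_1$, indistinguishable by short \mso\ sentences in isolation, but such that $\mathbb{K}$-representability of the whole amalgam depends on the global pattern of choices $G_0$ versus $G_1$. The infinitude of $\mathbb{K}$ is used crucially here: over an infinite field there is enough ``room'' to realise generic placements of the representable gadgets in general position, whereas a single incompatible gadget (or imbalanced global count) forces an algebraic obstruction that kills $\mathbb{K}$-representability. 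Set $M$ to be the all-$G_0$ amalgam and $N$ to be a minimally perturbed variant.

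For step (ii) I would rely on the fact that the amalgamation has a large automorphism group permuting the gadget copies. At each round, when spoiler selects an element or a subset of one of the two structures, duplicator partitions the selected subset according to its intersection with each gadget copy, then picks an image in the other structure by permuting gadget copies so that the ``types'' of the played elements and subsets (with respect to the frame and to the previously-played objects) are preserved. The key lemma is that after $q$ rounds the number of ``distinguishable'' gadget types is bounded by a function of $q$, so taking enough copies makes it easy for duplicator to mirror spoiler's moves; the monadic (subset) moves are tamed by observing that any chosen subset can be replaced, without affecting the remaining $q{-}1$ rounds of play, by its ``rounding'' to a union of whole gadget copies of each type.

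The main obstacle is the gadget design. One must produce a gadget whose two variants are (a) locally \mso-indistinguishable to arbitrary fixed depth after amalgamation, yet (b) globally witness non-$\mathbb{K}$-representability as soon as the global pattern deviates from the target. Calibrating these two requirements against each other, and verifying that the resulting amalgam is indeed $\mathbb{K}$-representable in the ``good'' case (this is where the infinitude of $\mathbb{K}$ must be leveraged, probably via a genericity/dimension argument on a variety parametrising representations), is the technical heart of the proof. The EF game analysis, although careful, is then routine given the symmetry of the amalgam.
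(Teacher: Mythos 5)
Your outline is not a proof: the entire mathematical content is deferred. The paper's argument hinges on an explicit pair of constructions --- the hoop matroids $M(\Gamma(\mbb{K},s,\alpha))$ and loop matroids $M(\Delta(\mbb{K},t,\alpha))$, gain graphs over the multiplicative group of $\mbb{K}$ glued along a rank\dash $2$ flat --- together with two hard facts: \amal{M}{M''} is $\mbb{K}$\dash representable when the parameters match (\Cref{velvet}), and is representable over \emph{no} field when they differ (\Cref{alcove}), the latter proved by a delicate analysis of balanced cycles and of dependence in the proper amalgam (\Cref{jackal}). Your proposal says exactly this gadget design ``is the technical heart'' and then does not supply it; nor does it supply the representability verification, which you wave at with a ``genericity/dimension argument'' that plays no role in the actual construction. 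In the paper, the infinitude of $\mbb{K}$ is used only to produce elements of arbitrarily large multiplicative order, so that more than $N_{2}(k)$ values of the length parameter $s$ are available and two of them, $s\neq t$, land in the same block by pigeonhole. That points to a second structural mismatch: what works is not a pair of ``locally indistinguishable variants $G_{0},G_{1}$'' with a minimally perturbed copy (such a local difference is exactly the kind of thing an \mso\ sentence with set quantifiers can detect), but a single family indexed by a length that a $k$\dash variable sentence cannot count.

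There is also a gap in your game analysis itself. The paper deliberately does not use an Ehrenfeucht--Fra\"{\i}ss\'e game (that was the method of \cite{MNW}); it proves a Myhill--Nerode\dash style finite\dash index statement (\Cref{sentry}) via the registry/tree machinery, whose correctness for amalgams rests on \Cref{jackal}: independence in \amal{M}{M''} is \emph{not} determined componentwise, because sets spread across the two sides interact through the shared flat $\ell$. Your duplicator strategy assumes a spoiler's monadic move can be ``rounded'' to a union of whole gadget copies without affecting the rest of the play; that step is unjustified and false in general, precisely because the independence predicate couples the pieces. To make any composition argument (game\dash theoretic or type\dash theoretic) go through, you need an explicit lemma describing dependence in the glued structure in terms of data recorded about each piece --- this is what the paper's augmented registries (tracking $\cl_{M}(Y_{i}-\ell)\cap\ell$, spanning of $\ell$, and skewness) accomplish, and it is missing from your sketch.
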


These theorems may seem stronger than those in \cite{MNW14}, but
in fact the results are independent of each other.
The logical language used in \cite{MNW14} had constraints on
quantification, unlike \mso, but it also had access
to the rank function, and to the arithmetic of the integers,
while \mso\ does not.

\Cref{jungle,proton} follow easily from two lemmas.
Let $k$ be a positive integer.
Let $M_{1}$ and $M_{2}$ be matroids.
We will say that a \emph{$k$\dash certificate}
for $M_{1}$ and $M_{2}$ is a pair, $(M',\psi)$, where
$M'$ is a matroid satisfying
$E(M')\cap (E(M_{1})\cup E(M_{2}))=\emptyset$, and $\psi$
is a sentence in \mso\ with $k$ variables such that
$\psi$ is satisfied by exactly one of the direct sums
$M_{1}\oplus M'$ and $M_{2}\oplus M'$.
We define $M_{1}$ and $M_{2}$ to be \emph{$k$\dash equivalent}
if there is no $k$\dash certificate for $M_{1}$ and $M_{2}$.
This relation is obviously reflexive and
symmetric.
Assume that $M_{1}$ is $k$\dash equivalent to $M_{2}$, and
$M_{2}$ is $k$-equivalent to $M_{3}$, but that
$(M',\psi)$ is a $k$\dash certificate for $M_{1}$ and $M_{3}$.
Relabelling the ground set of a matroid has no effect on
whether it satisfies a sentence in \mso.
Therefore we can assume that $E(M')$ is disjoint
from $E(M_{1})\cup E(M_{2})\cup E(M_{3})$.
Now $(M',\psi)$ is a $k$\dash certificate for $M_{1}$ and $M_{2}$,
or for $M_{2}$ and $M_{3}$, a contradiction.
Therefore $k$\dash equivalence truly is an equivalence relation.

If two matroids are $k$-equivalent, then no $k$-variable
sentence can distinguish them, even after adjoining
an arbitrary matroid via a direct sum.

\begin{lemma}
\label{iguana}
Let $k$ be a positive integer.
There are only finitely many equivalence classes of matroids
under the relation of $k$\dash equivalence.
\end{lemma}

In \Cref{firstlemma}, we will find an explicit bound on the number of
equivalence classes.
By using \Cref{iguana}, we can easily deduce \Cref{jungle}.

\begin{proof}[Proof of \textup{\Cref{jungle}}.]
Assume that there is a sentence, $\psi$, in \mso, that characterises
representable matroids.
Let $k$ be the number of variables in $\psi$.
We apply \Cref{iguana}.
Because there are infinitely many prime numbers, we can assume that
$M_{1}$ and $M_{2}$ are $k$-equivalent, where
$M_{1}\cong \mathrm{PG}(2,p)$ and $M_{2}\cong \mathrm{PG}(2,p')$
for distinct primes, $p$ and $p'$.
We choose $M'$ to be isomorphic to $M_{1}$, where
$E(M')\cap (E(M_{1})\cup E(M_{2}))=\emptyset$.
Then $\psi$ is satisfied by both $M_{1}\oplus M'$ and $M_{2}\oplus M'$,
or it is satisfied by neither.
But $M_{1}\oplus M'\cong \mathrm{PG}(2,p)\oplus \mathrm{PG}(2,p)$ is
representable over $\mathrm{GF}(p)$ \cite[Proposition~4.2.11]{Oxl11}.
On the other hand, both $\mathrm{PG}(2,p')$ and
$\mathrm{PG}(2,p)$ are isomorphic to minors of
$M_{2}\oplus M'$ \cite[4.2.19]{Oxl11}, so it follows from
\cite[Proposition~3.2.4]{Oxl11} and
\cite[Proposition~7.3]{Aig97} that if $M_{2}\oplus M'$
is representable over a field, then that field must simultaneously
have subfields isomorphic to $\mathrm{GF}(p)$ and $\mathrm{GF}(p')$,
an impossibility.
To summarise, $M_{1}\oplus M'$ is representable, and $M_{2}\oplus M'$ is not,
but $\psi$ is satisfied by both, or by neither.
Thus $\psi$ certainly does not characterise representable matroids.
\end{proof}

The notion of $k$\dash equivalence is reminiscent of the
Myhill-Nerode characterisation of regular languages
(see \cite{Ner58} or \cite[Section~6.1]{EF06}).
\Cref{iguana} is also a matroid analogue of the fact that a graph
property definable in monadic second-order logic
can be recognised by an automaton \cite{Cou90}, and is therefore
\emph{finite}, in the sense of Lengauer and Egon \cite{LE88}.
By way of contrast, the theorem in \cite{MNW14} used a proof
technique that was essentially an Ehrenfeucht-Fra\"{i}ss\'{e} game
(see \cite[Section~2.2]{EF06}).
Note that if two matroids are $k$-equivalent, then they satisfy
exactly the same $k$\dash variable sentences
(since the empty matroid is not a $k$\dash certificate).
This implies the known fact that there are only finitely many
\emph{rank\dash $k$ $0$\dash types}
(see \cite[Section~3.4]{Lib04} for an explanation).

Our second lemma will be used to prove \Cref{proton}.
In this case, it will not suffice to use direct sums,
as the sum of two \mbb{K}\dash representable matroids
is also \mbb{K}\dash representable.
Thus we use the notion of a \emph{proper amalgam}
(which will be precisely defined in \Cref{amalgams}).
Let $\mcal{M}_{\ell}$ be the set of matroids that contain
a $U_{2,5}$\dash restriction on the set $\ell=\{a,b,x,y,z\}$.
If $M_{1}$ and $M_{2}$ are matroids in $\mcal{M}_{\ell}$, and
$E(M_{1})\cap E(M_{2})=\ell$, then the proper amalgam of $M_{1}$ and $M_{2}$
exists, and is denoted by \amal{M_{1}}{M_{2}}.
The ground set of \amal{M_{1}}{M_{2}} is $E(M_{1})\cup E(M_{2})$, and
$\amal{M_{1}}{M_{2}}|E(M_{i})=M_{i}$ for $i=1,2$.

Let $k$ be a positive integer.
Let $M_{1}$ and $M_{2}$ be matroids in $\mcal{M}_{\ell}$.
A \emph{$(k,\ell)$\dash certificate}
is a pair, $(M',\psi)$, where $M'\in \mcal{M}_{\ell}$
satisfies $E(M')\cap (E(M_{1})\cup E(M_{2}))=\ell$, and
$\psi$ is a $k$\dash variable sentence that is satisfied
by exactly one of \amal{M_{1}}{M'} and \amal{M_{2}}{M'}.
We say that $M_{1}$ and $M_{2}$ are \emph{$(k,\ell)$\dash equivalent}
if there is no such certificate.

\begin{lemma}
\label{sentry}
Let $k$ be a positive integer.
There are only only finitely many equivalence classes of $\mcal{M}_{\ell}$
under the relation of $(k,\ell)$\dash equivalence.
\end{lemma}

Again, we will explicitly bound the number of equivalence classes.

In \Cref{gain}, we will construct two families of matroids in
$\mcal{M}_{\ell}$ by using \emph{gain graphs}.
Loosely speaking, a gain graph is a graph equipped with edge labels that come
from a group.
For each such graph, there is a corresponding gain-graphic matroid,
whose ground set is the edge set of the graph.
Let \mbb{K} be a field, let $s,t\geq 3$ be integers, and let
$\alpha$ and $\beta$ be elements in $\mbb{K}-\{0\}$
with orders greater than, respectively, $s$ and $2t(t-1)$.
For each such pair of tuples, $(\mbb{K},s,\alpha)$ and
$(\mbb{K},t,\beta)$, there are unique gain graphs,
which we will denote by $\Gamma(\mbb{K},\alpha, s)$
and $\Delta(\mbb{K},\beta,t)$.
(We postpone the exact descriptions until \Cref{gain}.)
The edge labels of $\Gamma(\mbb{K},\alpha,s)$
and $\Delta(\mbb{K},\beta,t)$ come from the multiplicative group
of \mbb{K}.

Assume that $M$ corresponds to
the gain graph $\Gamma(\mbb{K},\alpha,s)$, and that
$M'$ corresponds to $\Delta(\mbb{K},\beta,t)$.
We also assume that $E(M)\cap E(M')=\ell$.
In the case that $\alpha=\beta$, where the order of $\alpha$ is greater
than $\max\{s,2t(t-1)\}$, both $M$ and $M'$ can be represented over
\mbb{K}, but \amal{M}{M'} can be represented over
\mbb{K} if and only if $s=t$.
This means that \Cref{sentry} quickly leads to a proof of
\Cref{proton}, with the two families of gain-graphic matroids
playing the same role that projective planes did in the proof
of \Cref{jungle}.
Details of the proof will be left until the end of the
paper.

In fact, \Cref{sentry} is sufficient to prove both \Cref{jungle}
and \Cref{proton}, since, if \amal{M}{M'} is not representable
over the field \mbb{K}, then it is not representable over
any field (\Cref{alcove}).
However, we feel that \Cref{iguana} is more intuitive,
and also interesting in its own right,
so we prefer to prove that lemma, and then note the changes
required to produce a proof of \Cref{sentry}.

\Cref{sentry} also implies the
following (unsurprising) facts:
using \mso\ to characterise increasingly large finite fields
requires increasingly large sentences.
Furthermore, it is not possible to axiomatise the class of matroids
representable over a given characteristic.

\begin{corollary}
\label{casava}
Let \mcal{Q} be the set of prime powers.
For each $q\in \mcal{Q}$, let $\psi_{q}$ be an \mso\ sentence such
that a matroid is $\mathrm{GF}(q)$\dash representable if and
only if it satisfies $\psi_{q}$.
There is no integer, $N$, such that every sentence in $\{\psi_{q}\}_{q\in\mcal{Q}}$
has at most $N$ variables.
\end{corollary}

\begin{corollary}
\label{pulsar}
Let $c$ be either $0$ or a prime number.
There is no sentence, $\psi_{c}$, in \mso\, such that a matroid is
representable over a field of characteristic $c$ if and only if
it satisfies $\psi_{c}$.
\end{corollary}

The paper is structured as follows:
\Cref{mso} introduces the \mso\ language for matroids, and
discusses its expressive power;
\Cref{firstlemma} gives a proof of \Cref{iguana};
in \Cref{amalgams} we define the proper amalgam of matroids
along a $U_{2,5}$\dash restriction, and prove some of
its properties;
\Cref{gain} introduces gain-graphic matroids,
and defines the two special classes of matroids.
Finally, in \Cref{secondlemma}, we prove \Cref{sentry},
and complete the proof of \Cref{proton}, and \Cref{casava,pulsar}.
For all matroid essentials we refer to Oxley \cite{Oxl11}.

\section{Monadic second-order logic}
\label{mso}

In this section we give a formal definition of our
monadic second-order language for matroids.
The language \mso\ includes a countably infinite supply of
variables, $X_{1},X_{2},X_{3},\ldots$ along with the binary predicate,
$\subseteq$, the unary predicates, $\mathrm{Sing}$ and $\mathrm{Ind}$,
as well as the standard connectives $\land$ and $\neg$,
and the quantifier $\exists$.

We recursively define formulas in \mso, and simultaneously
define their sets of variables.
The following statements define
expressions known as \emph{atomic formulas}.
\begin{enumerate}[label=(\arabic*)]
\item $X_{i}\subseteq X_{j}$ is an atomic formula, for
any variables $X_{i}$ and $X_{j}$, and
$\var{X_{i}\subseteq X_{j}}=\{X_{i},X_{j}\}$.
\item $\mathrm{Sing}(X_{i})$ is an atomic formula, for any variable
$X_{i}$, and $\var{\mathrm{Sing}(X_{i})}=\{X_{i}\}$.
\item $\mathrm{Ind}(X_{i})$ is an atomic formula, for any variable
$X_{i}$, and $\var{\mathrm{Ind}(X_{i})}=\{X_{i}\}$.
\end{enumerate}

A \emph{formula} is an expression generated by a finite
application of the following rules.
Every formula has an associated set of variables
and \emph{free variables}:
\begin{enumerate}[label=(\arabic*)]
\item Every atomic formula, $\psi$, is a formula,
and $\fr{\psi}=\var{\psi}$.
\item If $\psi$ is a formula, then
$\neg \psi$ is a formula, and $\var{\neg \psi}=\var{\psi}$, while
$\fr{\neg \psi}=\fr{\psi}$.
\item If $\psi_{1}$ and $\psi_{2}$ are formulas, and
$\fr{\psi_{i}}\cap (\var{\psi_{j}}-\fr{\psi_{j}})=\emptyset$ for $\{i,j\}=\{1,2\}$,
then $\psi_{1}\land \psi_{2}$ is a formula, and
$\var{\psi_{1}\land \psi_{2}}=\var{\psi_{1}}\cup\var{\psi_{2}}$, while
$\fr{\psi_{1}\land \psi_{2}}=\fr{\psi_{1}}\cup \fr{\psi_{2}}$.
\item If $\psi$ is a formula and $X_{i}\in \fr{\psi}$, then
$\exists X_{i} \psi$ is a formula, and
$\var{\exists X_{i} \psi}=\var{\psi}$, while
$\fr{\exists X_{i} \psi}=\fr{\psi}-\{X_{i}\}$.
\end{enumerate}
A variable in \var{\psi} is \emph{free} if it is in \fr{\psi}, and
\emph{bound} otherwise.
A formula is \emph{quantifier-free} if all of its variables are free,
and is a \emph{sentence} if all its variables are bound.
If $\psi$ is a quantifier-free formula, then we will define the
\emph{depth} of $\psi$ to be the number of applications
of Rules (2) and (3) required to construct $\psi$.
Rule (3) insists that no variable can be free in one of $\psi_{1}$
and $\psi_{2}$ and bound in the other, if $\psi_{1}\land \psi_{2}$
is to be a formula.
We can overcome this constraint if necessary by renaming the
bound variables in a formula.

If $\psi$ is a formula and $X_{i}\in\fr{\psi}$, then we
use $\forall X_{i} \psi$ as a shorthand for $\neg (\exists X_{i} \neg \psi)$.
We also use the shorthand $\psi_{1}\lor \psi_{2}$ to mean
$\neg((\neg\psi_{1})\land (\neg \psi_{2}))$
and we use $\psi_{1}\to\psi_{2}$ to mean $(\neg \psi_{1})\lor \psi_{2}$.
Likewise, we use $\psi_{1}\leftrightarrow \psi_{2}$ to mean
$(\psi_{1}\to\psi_{2})\land(\psi_{2}\to\psi_{1})$.
We use $X\nsubseteq Y$ to stand for $\neg (X\subseteq Y)$.

Let $\psi$ be a formula in \mso.
An \emph{interpretation} of $\psi$ is a pair $(M,\tau)$,
where $M=(E,\mcal{I})$ consists of a set, $E$, and a collection, \mcal{I},
of subsets of $E$, and $\tau$ is a function
from \fr{\psi} into the power set of $E$.
We will recursively define what it means for $(M,\tau)$ to
\emph{satisfy} $\psi$, starting with the case that $\psi$ is atomic.
If $\psi$ is $X_{i}\subseteq X_{j}$, then $(M,\tau)$ satisfies
$\psi$ if and only if $\tau(X_{i})\subseteq \tau(X_{j})$.
If $\psi$ is $\mathrm{Sing}(X_{i})$, then $(M,\tau)$ satisfies $\psi$
if and only if $|\tau(X_{i})|=1$.
Finally, if $\psi$ is $\mathrm{Ind}(X_{i})$, then
$(M,\tau)$ satisfies $\psi$
if and only if $\tau(X_{i})$ is in \mcal{I}.

Now we assume that $\psi$ is not atomic.
If $\psi$ is $\neg\phi$ for some formula $\phi$, then $(M,\tau)$ satisfies
$\psi$ is if and only if $(M,\tau)$ does not satisfy $\phi$.
Assume that $\psi$ is $\phi_{1}\land \phi_{2}$.
Then $(M,\tau)$ satisfies $\psi$ if and only if
$(M,\tau \upharpoonright_{\fr{\phi_{1}}})$ satisfies $\phi_{1}$ and
$(M,\tau \upharpoonright_{\fr{\phi_{2}}})$ satisfies $\phi_{2}$.
Finally, assume that $\psi$ is $\exists X_{i}\phi$, where $X_{i}$
is a free variable in the formula $\phi$.
Then $(M,\tau)$ satisfies $\psi$ if and only if there exists a
subset, $Y_{i}\subseteq E$, such that
the interpretation $(M,\tau\cup\{(X_{i},Y_{i})\})$ satisfies
$\phi$.
If $\psi$ is an \mso\ sentence, then we say that
$M=(E,\mcal{I})$ \emph{satisfies} $\psi$
(or $\psi$ is satisfied by $M$) if the interpretation
$(M,\emptyset)$ satisfies $\psi$.

We will spend some time illustrating the expressive power of \mso.
It is powerful enough to state the axioms for matroids, and to
characterise when a matroid contains a fixed minor.

If $t\geq 2$ is an integer, we use
$\mathrm{Union}_{t}(X_{i_{1}},\ldots,X_{i_{t}},X_{i_{t+1}})$ as shorthand for the
formula
\[
\forall X\ \mathrm{Sing}(X) \to
(X\subseteq X_{i_{t+1}}\leftrightarrow \bigvee_{1\leq j\leq t}X\subseteq X_{i_{j}}).
\]
The variable $X$ stands for some variable different from each of
$X_{i_{1}},\ldots,X_{i_{t+1}}$.
Clearly the formula $\mathrm{Union}_{t}(X_{i_{1}},\ldots,X_{i_{t}},X_{i_{t+1}})$ is
satisfied by the interpretation $(M,\tau)$ if and only if $\tau(X_{i_{t+1}})$ is
equal to $\tau(X_{i_{1}})\cup\cdots\cup \tau(X_{i_{t}})$.

We let $\mathrm{Max}(X_{i})$ stand for the formula
\[
\mathrm{Ind}(X_{i})\land (\forall X\ X_{i}\subseteq X \to
(X\subseteq X_{i} \lor \neg \mathrm{Ind}(X))).
\]
Therefore $\mathrm{Max}(X_{i})$ is satisfied by $\tau$ in $M=(E,\mcal{I})$
if and only if $\tau(X_{i})$ is a maximal member of \mcal{I}.

Let $E$ be a finite set, and let $\mcal{I}$ be a collection of subsets of $E$.
Then \mcal{I} is the family of independent sets of a matroid, $M=(E,\mcal{I})$,
if and only if $M$ satisfies the following sentences:
\begin{enumerate}[label=\textbf{I\arabic*.}]
\item $\exists X_{1}\ \mathrm{Ind}(X_{1})$
\item $\forall X_{1}\forall X_{2}\ (\mathrm{Ind}(X_{1})\land (X_{2}\subseteq X_{1}))\to \mathrm{Ind}(X_{2})$
\item $\forall X_{1}\forall X_{2}\ (\mathrm{Max}(X_{1})\land \mathrm{Ind}(X_{2})\land \neg\mathrm{Max}(X_{2}))\to$\\
$\exists X_{3}\ \mathrm{Sing}(X_{3})\land (X_{3}\subseteq X_{1})\land (X_{3}\nsubseteq X_{2})\land$\\
$\exists X_{4}\ (\mathrm{Union}_{2}(X_{2},X_{3},X_{4})\land \mathrm{Ind}(X_{4}))$
\end{enumerate}
The sentence \textbf{I3} declares that if $X_{1}$ is a maximal
set in \mcal{I}, and $X_{2}$ is a non-maximal set, then there
is an element $x\in X_{1}-X_{2}$ such that $X_{2}\cup \{x\}$ is in \mcal{I}.
It is not difficult to show that these axioms imply that the maximal
members of \mcal{I} are equicardinal.
From this it follows immediately that the maximal members of
\mcal{I} obey the matroid basis axioms.
Therefore \textbf{I1}, \textbf{I2}, and \textbf{I3}
axiomatise matroids, as claimed.

Next we let $N$ be a fixed matroid on the ground set $\{1,\ldots, n\}$, with
\mcal{I} as its collection of independent sets.
Let \mcal{D} be the set of dependent subsets of $N$.
A matroid has a minor isomorphic to $N$ if and only if
it contains distinct elements $x_{1},\ldots, x_{n}$, and an independent
set, $X_{n+1}$, such that
$\{x_{1},\ldots, x_{n}\}\cap X_{n+1}=\emptyset$,
and $\{x_{i_{1}},\ldots, x_{i_{t}}\}\cup X_{n+1}$ is independent
precisely when $\{i_{1},\ldots, i_{t}\}$ is an independent set of $N$.
In this case, $N$ is isomorphic to the minor produced by
contracting $X_{n+1}$ and restricting to the set $\{x_{1},\ldots, x_{n}\}$.
Thus we see that a matroid has a minor isomorphic to $N$
if and only if it satisfies the following sentence:
\begin{linenomath}
\begin{align*}
\exists X_{1}\cdots&\exists X_{n}\exists X_{n+1}\
\mathrm{Ind}(X_{n+1})\land
\bigwedge_{1\leq i\leq n}(\mathrm{Sing}(X_{i})\land (X_{i}\nsubseteq X_{n+1}))\\
&\land\bigwedge_{1\leq i<j\leq n}X_{i}\nsubseteq X_{j}\\
&\land\bigwedge_{\{i_{1},\ldots,i_{t}\}\in\mcal{I}}(\exists X\
\mathrm{Union}_{t+1}(X_{i_{1}},\ldots,X_{i_{t}},X_{n+1},X)\land \mathrm{Ind}(X))\\
&\land\bigwedge_{\{i_{1},\ldots,i_{t}\}\in\mcal{D}}(\exists X\
\mathrm{Union}_{t+1}(X_{i_{1}},\ldots,X_{i_{t}},X_{n+1},X)\land \neg\mathrm{Ind}(X))
\end{align*}
\end{linenomath}
It follows that there is an \mso\ sentence that will characterise a minor-closed
class of matroids, as long as that class has only finitely many excluded minors.

\section{Proof of Lemma~1.3}
\label{firstlemma}

Let $k$ be a positive integer.
Define $g_{1}(k,0)$ to be
$
2^{k(k+1)}3^{k},
$
and recursively define $g_{1}(k,n+1)$ to be $2^{g_{1}(k,n)}$.
Let $f_{1}(k)$ be $g_{1}(k,k)$.
Our goal in this section is to prove Lemma~\ref{iguana}.
We restate the lemma here, with an explicit bound on the
number of equivalence classes.

\begin{lemma}
\label{fiesta}
Let $k$ be a positive integer.
There are at most $f_{1}(k)$ equivalence classes of matroids
under the relation of $k$\dash equivalence.
\end{lemma}

\begin{proof}
We define a \emph{registry} to be a $(k+2)\times k$ matrix with
rows indexed by $\mathrm{Ind}$, $\mathrm{Sing}$, and
$X_{1},\ldots, X_{k}$, and columns indexed
by $X_{1},\ldots, X_{k}$.
An entry in row $\mathrm{Ind}$ or in row $X_{i}$
must be `T'  or `F'.
An entry in the row indexed by $\mathrm{Sing}$ is
either `$0$', `$1$', or `$>$'.
It follows that there are at most
$g_{1}(k,0)$ possible registries.

We define a \emph{depth\dash $0$ tree} to be a registry.
Recursively, a \emph{depth\dash $(n+1)$ tree} is a non-empty set of
depth\dash $n$ trees.
An easy inductive argument shows that there are no
more than $g_{1}(k,n+1)$ depth\dash $(n+1)$ trees, and hence no
more than $f_{1}(k)$ depth\dash $k$ trees.

A \emph{stacked matroid} is a tuple
$\mcal{M}=(M,Y_{1},\ldots, Y_{m})$, where $M$ is a matroid,
and each $Y_{i}$ is a subset of $E(M)$.
We define $||\mcal{M}||$ to be $m$.
We can identify the matroid $M$ with the stacked matroid
$\mcal{M}=(M)$, and note that in this case, $||\mcal{M}||=0$.

To each stacked matroid, $\mcal{M}$, satisfying
$||\mcal{M}||\leq k$, we are going to associate
a tree, $\mcal{T}(\mcal{M})$, of depth $k-||\mcal{M}||$.
We start by assuming that $k-||\mcal{M}||=0$, so that
$\mcal{T}(\mcal{M})$ is a depth\dash $0$ tree, which is to say, a registry.
Let \mcal{M} be $(M,Y_{1},\ldots, Y_{k})$.
For every $j$ in $\{1,\ldots, k\}$, set the entry of $\mcal{T}(\mcal{M})$
in row $\mathrm{Ind}$ and column $X_{j}$ to be `T' if
$Y_{j}$ is independent in $M$, and otherwise set it to be `F'.
Now, for every pair $i,j\in \{1,\ldots, k\}$, set the entry of
$\mcal{T}(\mcal{M})$ in row $X_{i}$ and column $X_{j}$ to be
`T' if and only if $Y_{i}\subseteq Y_{j}$.
Finally, for each $j \in \{1,\ldots, k\}$, set the entry of
$\mcal{T}(\mcal{M})$ in row $\mathrm{Sing}$ and column $X_{j}$
to be `$0$' if $|Y_{j}|=0$, set it to be `$1$' if
$|Y_{i}|=1$, and set it to `$>$' otherwise.
This defines $\mcal{T}(\mcal{M})$ in the case that $k-||\mcal{M}||=0$.

Now we make the inductive assumption that $\mcal{T}(\mcal{M})$
is defined when $k-||\mcal{M}||\leq n$, where $n$ is some integer
in $\{0,\ldots, k-1\}$.
Let $\mcal{M}=(M,Y_{1},\ldots, Y_{k-n-1})$ be a stacked matroid.
Thus $k-||\mcal{M}||=n+1$.
Let $Y_{k-n}$ be any subset of $E(M)$.
If $\mcal{M}'=(M,Y_{1},\ldots, Y_{k-n-1},Y_{k-n})$, then
$k-||\mcal{M}'||=n$, so our inductive assumption means that
$\mcal{T}(\mcal{M}')$ is defined and is a depth\dash $n$ tree.
Since a depth\dash $(n+1)$ tree is a non-empty set of
depth\dash $n$ trees, we simply define $\mcal{T}(\mcal{M})$ to
be the set
\[
\{\mcal{T}(M,Y_{1},\ldots, Y_{k-n-1},Y_{k-n})\colon Y_{k-n}\subseteq E(M)\}.
\]
We have now defined $\mcal{T}(\mcal{M})$ for each
stacked matroid, $\mcal{M}$, that satisfies $||\mcal{M}||\leq k$.
Note that if $M$ is a matroid, then the stacked matroid
$\mcal{M}=(M)$ satisfies $||\mcal{M}||=0$, and
hence $\mcal{T}(\mcal{M})$ is a depth\dash $k$ tree.

Let $\psi$ be a formula in \mso\ such that
either $\psi$ is quantifier-free, or
$\var{\psi}=\{X_{1},\ldots, X_{k}\}$.
Let $b(\psi)$ be the number of bound variables in $\psi$.
We are going to define what it means for
$\mcal{T}$ and $\mcal{T}'$
to be \emph{$\psi$\dash compatible}
when $\mcal{T}$ and $\mcal{T}'$ are
depth\dash $b(\psi)$ trees.

In the first case, we assume that $\psi$ is quantifier-free, so that
$b(\psi)=0$, and  $\mcal{T}$ and $\mcal{T}'$
are depth\dash $0$ trees; that is, registries.
To start with, we assume that $\psi$ is an atomic formula.
If $\psi$ is $X_{i}\subseteq X_{j}$, then we define
$\mcal{T}$ and $\mcal{T}'$ to be $\psi$\dash compatible if and only if
their entries in row $X_{i}$ and column $X_{j}$ are both `T'.
Similarly, if $\psi$ is $\mathrm{Ind}(X_{j})$, then we define
$\mcal{T}$ and $\mcal{T}'$ to be $\psi$\dash compatible if and only
both $\mcal{T}$ and $\mcal{T}'$ have `T' as their entries
in row $\mathrm{Ind}$ and column $X_{j}$.
Next we assume that $\psi$ is $\mathrm{Sing}(X_{j})$.
Let $\omega$ be the entry of $\mcal{T}$ in row $\mathrm{Sing}$
and column $X_{j}$.
Let $\omega'$ be the analogous entry of $\mcal{T}'$.
We define $\mcal{T}$ and $\mcal{T}'$ to be $\psi$\dash compatible
if and only if $\{\omega,\omega'\}=\{\text{`$0$'},\text{`$1$'}\}$.

This defines $\psi$\dash compatibility in the case that $\psi$ is
atomic, so we will now assume it is not atomic.
Since it is quantifier-free, this means that $\psi$ has the form
$\neg \phi$ or $\phi_{1}\land \phi_{2}$.
First assume that $\psi$ is $\neg\phi$, where $\phi$
is quantifier-free.
By induction on the depth of quantifier-free formulas, we can determine
whether or not
$\mcal{T}$ and $\mcal{T}'$ are $\phi$\dash compatible.
We define $\mcal{T}$ and $\mcal{T}'$ to be $\psi$\dash compatible
if and only if 
$\mcal{T}$ and $\mcal{T}'$ are \emph{not} $\phi$\dash compatible.
Next assume that $\psi$ is $\phi_{1}\land \phi_{2}$.
Again, $\phi_{1}$ and $\phi_{2}$ have no bound variables,
and by induction on the depth of quantifier-free formulas, we
can determine whether
$\mcal{T}$ and $\mcal{T}'$ are compatible relative to $\phi_{1}$ and $\phi_{2}$.
We define $\mcal{T}$ and $\mcal{T}'$ to be $\psi$\dash compatible
if and only if $\mcal{T}$ and $\mcal{T}'$ are both
$\phi_{1}$\dash compatible and $\phi_{2}$\dash compatible.
We have now defined $\psi$\dash compatibility in the case
that $\psi$ has no bound variables.

Next we will assume that $\var{\psi}=\{X_{1},\ldots, X_{k}\}$.
By the previous paragraphs, we can make the inductive assumption
that $\psi$\dash compatibility is defined if $b(\psi)\leq n$,
where $n$ is some integer in $\{0,\ldots, k-1\}$.
Let $\psi$ be a formula with
$\var{\psi}=\{X_{1},\ldots, X_{k}\}$ and assume that
$\psi$ has $n+1$ bound variables.
By renaming variables, we will assume that
$\fr{\psi}=\{X_{1},\ldots, X_{k-n-1}\}$, and that
$X_{k-n},\ldots, X_{k}$ are the bound variables of $\psi$.
By standard techniques, we can assume that $\psi$ is
in \emph{prenex normal form}.
That is,
\[
\psi=Q_{k-n}X_{k-n}\cdots Q_{k}X_{k}\ \psi'
\]
where each $Q_{j}$ is either $\exists$ or $\forall$, and $\psi'$
is a quantifier-free formula in \mso\ with $\var{\psi'}= \{X_{1},\ldots, X_{k}\}$.
Let $\phi$ be the formula
$Q_{k-n+1}X_{k-n+1}\cdots Q_{k}X_{k}\ \psi'$
obtained from $\psi$ by removing the quantification of $X_{k-n}$.

Let  $\mcal{T}$ and $\mcal{T}'$ be trees of depth $b(\psi)=n+1$.
Thus $\mcal{T}$ and $\mcal{T}'$ are non-empty
set of depth\dash $n$ trees.
First consider the case that $Q_{k-n}=\exists$.
The number of bound variables in $\phi$ is $n$.
If $\mcal{T}_{0}$ is a depth\dash $n$ tree contained in $\mcal{T}$,
and $\mcal{T}_{0}'$ is a depth\dash $n$ tree in $\mcal{T}'$,
then by the inductive hypothesis, $\phi$\dash compatibility of
$\mcal{T}_{0}$ and $\mcal{T}_{0}'$ is defined.
We define $\mcal{T}$ and $\mcal{T}'$ to be $\psi$\dash compatible
if and only if there exist trees,
$\mcal{T}_{0}\in \mcal{T}$  and $\mcal{T}_{0}'\in\mcal{T}'$
that are $\phi$\dash compatible.

Similarly, if $Q_{k-n}=\forall$,
we define $\mcal{T}$ and $\mcal{T}'$ to be
$\psi$\dash compatible if and only if
$\mcal{T}_{0}$ and $\mcal{T}_{0}'$ are $\phi$\dash compatible
for every tree $\mcal{T}_{0}\in \mcal{T}$ and
every tree $\mcal{T}_{0}'\in\mcal{T}'$.
This completes the definition of $\psi$\dash compatibility.

The following \namecref{yogurt} contains the heart of the proof of
\Cref{fiesta}.

\begin{claim}
\label{yogurt}
Let $\psi$ be an \mso\ formula such that either
$\psi$ is quantifier-free, or $\var{\psi}=\{X_{1},\ldots, X_{k}\}$.
If $\var{\psi}=\{X_{1},\ldots, X_{k}\}$, then let $m$ be
$|\fr{\psi}|$ and assume that $\fr{\psi}=\{X_{1},\ldots, X_{m}\}$.
Otherwise, let $m$ be $k$.
Let $\mcal{M}=(M,Y_{1},\ldots, Y_{m})$ and
$\mcal{M}'=(M',Y_{1}',\ldots, Y_{m}')$ be stacked matroids,
where $E(M)\cap E(M')=\emptyset$.
Define $\tau$ to be the function that takes $X_{i}$ to
$Y_{i}\cup Y_{i}'$, for each $X_{i}\in \fr{\psi}$.
The interpretation $(M\oplus M',\tau)$ satisfies $\psi$
if and only if the trees, $\mcal{T}(\mcal{M})$ and
$\mcal{T}(\mcal{M}')$, are $\psi$\dash compatible.
\end{claim}

\begin{proof}
Let $b(\psi)$ be the number of bound variables in $\psi$.
We will prove the \namecref{yogurt} by induction on $b(\psi)$.
Note that $\mcal{T}(\mcal{M})$ and $\mcal{T}(\mcal{M}')$
both have depth $k-m=b(\psi)$.

For our base case, we assume that $b(\psi)=0$, so that
$\psi$ is quantifier-free, $||\mcal{M}||=||\mcal{M}'||=k$, and
both $\mcal{T}(\mcal{M})$ and $\mcal{T}(\mcal{M}')$ are registries.
Start by assuming that $\psi$ is an atomic formula.
Consider the case that $\psi$ is $X_{i}\subseteq X_{j}$.
Then $(M\oplus M',\tau)$ satisfies $\psi$ if and only if
$\tau(X_{i})\subseteq \tau(X_{j})$, which is true if and only if
$Y_{i}\subseteq Y_{j}$ and $Y_{i}'\subseteq Y_{j}'$.
But this is the case if and only if
$\mcal{T}(\mcal{M})$ and $\mcal{T}(\mcal{M}')$
both contain `T' in row $X_{i}$ and column $X_{j}$.
This is exactly what it means for
$\mcal{T}(\mcal{M})$ and $\mcal{T}(\mcal{M}')$
to be $\psi$\dash compatible, so we are done in this case.

In our next case, $\psi$ is $\mathrm{Ind}(X_{j})$.
Then $(M\oplus M',\tau)$ satisfies $\psi$ if and only if
$\tau(X_{j})$ is independent in $M\oplus M'$.
This is true if and only if $Y_{j}$ is independent in $M$ and
$Y_{j}'$ is independent in $M'$.
In turn, this is true if and only if
$\mcal{T}(\mcal{M})$ and $\mcal{T}(\mcal{M}')$
both contain `T' in row $\mathrm{Ind}$ and column $X_{j}$,
which is the case if and only if
$\mcal{T}(\mcal{M})$ and $\mcal{T}(\mcal{M}')$
are $\psi$\dash compatible.

Next, we assume that $\psi$ is $\mathrm{Sing}(X_{j})$.
Then $(M\oplus M',\tau)$ satisfies $\psi$ if and only if
$|\tau(X_{j})|=1$, and this is true if and only if
$\{|Y_{j}|,|Y_{j}'|\}=\{0,1\}$.
This holds if and only if the entries of
$\mcal{T}(\mcal{M})$ and $\mcal{T}(\mcal{M}')$
in row $\mathrm{Sing}$ and column $X_{j}$
are `$0$' and `$1$', in some order.
Once again, this is true precisely when
$\mcal{T}(\mcal{M})$ and $\mcal{T}(\mcal{M}')$
are $\psi$\dash compatible.
We have finished the case that $\psi$ is atomic, so now we
assume that $\psi$ is not atomic.

Since $\psi$ is quantifier-free, it has the form $\neg\phi$ or
$\phi_{1}\land \phi_{2}$.
Consider the former case.
By induction on the depth of quantifier-free formulas,
we can conclude that
$\mcal{T}(\mcal{M})$ and $\mcal{T}(\mcal{M}')$
are $\phi$\dash compatible if and only if
$(M\oplus M',\tau)$ satisfies $\phi$.
The definition of compatibility means that
$\mcal{T}(\mcal{M})$ and $\mcal{T}(\mcal{M}')$
are $\psi$\dash compatible if and only if
they are not $\phi$\dash compatible,
which is the case exactly when
$(M\oplus M',\tau)$ satisfies $\psi$.

Next we assume that $\psi$ is $\phi_{1}\land \phi_{2}$,
where $\phi_{1}$ and $\phi_{2}$ have no bound variables.
Again, we use induction on the depth of quantifier-free formulas.
We conclude that
$(M\oplus M',\tau\upharpoonright_{\fr{\phi_{\alpha}}})$ satisfies 
$\phi_{\alpha}$ if and only if $\mcal{T}(\mcal{M})$ and $\mcal{T}(\mcal{M}')$
are $\phi_{\alpha}$\dash compatibile, for $\alpha=1,2$.
This holds if and only if
$\mcal{T}(\mcal{M})$ and
$\mcal{T}(\mcal{M}')$ are
$\psi$\dash compatibile.
Thus we have proved the \namecref{yogurt} in the case that $b(\psi)=0$.

We make the inductive assumption that the
\namecref{yogurt} holds when the number of bound variables
is at most $n$, for some integer $n\in \{0,\ldots, k-1\}$.
Consider the case that $b(\psi)=n+1$.
We have assumed that $\fr{\psi}=\{X_{1},\ldots,X_{k-n-1}\}$, and
we can also assume that
\[
\psi=Q_{k-n}X_{k-n}\cdots Q_{k}X_{k}\ \psi'
\]
where each $Q_{j}$ is a quantifier, and $\psi'$
is quantifier-free and satisfies $\var{\psi'}= \{X_{1},\ldots, X_{k}\}$.
Let $\phi$ be $Q_{k-n+1}X_{k-n+1}\cdots Q_{k}X_{k}\ \psi'$.

Consider the case that $Q_{k-n}=\exists$.
Then $(M\oplus M',\tau)$ satisfies $\psi$ if and only if
there are subsets $Y_{k-n}\subseteq E(M)$ and
$Y_{k-n}'\subseteq E(M')$ such that
\[(M\oplus M', \tau\cup\{(X_{k-n},Y_{k-n}\cup Y_{k-n}')\})\]
satisfies $\phi$.
By the inductive assumption, this holds if and only if
$\mcal{T}(M,Y_{1},\ldots,Y_{k-n-1},Y_{k-n})$ and
$\mcal{T}(M',Y_{1}',\ldots,Y_{k-n-1}',Y_{k-n}')$ are
$\phi$\dash compatible.
Now $\mcal{T}(M,Y_{1},\ldots,Y_{k-n-1},Y_{k-n})$ is a
depth\dash $n$ tree contained in the depth\dash $(n+1)$
tree $\mcal{T}(\mcal{M})$, and
$\mcal{T}(M',Y_{1}',\ldots,Y_{k-n-1}',Y_{k-n}')$
is similarly contained in $\mcal{T}(\mcal{M}')$.
Thus the recursive definition of compatibility means that
$(M\oplus M',\tau)$ satisfies $\psi$ if and only if
$\mcal{T}(\mcal{M})$ and
$\mcal{T}(\mcal{M}')$ are
$\psi$\dash compatibile, exactly as desired.

The case when $Q_{k-n}=\forall$ is similar.
In this case, $(M\oplus M',\tau)$ satisfies $\psi$ if and only if
$(M\oplus M', \tau\cup\{(X_{k-n},Y_{k-n}\cup Y_{k-n}')\})$
satisfies $\phi$
for every choice of subsets $Y_{k-n}\subseteq E(M)$ and
$Y_{k-n}'\subseteq E(M')$.
By induction, this is true if and only if
$\mcal{T}(M,Y_{1},\ldots,Y_{k-n-1},Y_{k-n})$ and
$\mcal{T}(M',Y_{1}',\ldots,Y_{k-n-1}',Y_{k-n}')$ are
$\phi$\dash compatible, for every choice of
$Y_{k-n}$ and $Y_{k-n}'$.
This holds if and only if $\mcal{T}_{0}$ and
$\mcal{T}'_{0}$ are $\phi$\dash compatible, for all trees
$\mcal{T}_{0}\in \mcal{T}(\mcal{M})$ and
$\mcal{T}_{0}'\in \mcal{T}(\mcal{M}')$.
This is exactly what it means for
$\mcal{T}(\mcal{M})$ and $\mcal{T}(\mcal{M}')$
to be $\psi$\dash compatible, so the proof is complete.
\end{proof}

Let $M_{1}$ and $M_{2}$ be two matroids,
which we consider as stacked matroids
$\mcal{M}_{1}=(M_{1})$ and $\mcal{M}_{2}=(M_{2})$.
We complete the proof of \Cref{fiesta} by showing that
if the trees $\mcal{T}(\mcal{M}_{1})$ and $\mcal{T}(\mcal{M}_{2})$ are equal,
then $M_{1}$ and $M_{2}$ are $k$\dash equivalent.
This will imply that the number of equivalence classes is at most the
number of depth\dash $k$ trees, and we will be done.
Thus we assume that  $\mcal{T}(\mcal{M}_{1})=\mcal{T}(\mcal{M}_{2})$.

Let $M'$ be any matroid with $E(M')\cap (E(M_{1})\cup E(M_{2}))=\emptyset$,
and let $\mcal{M}'=(M')$ be the corresponding stacked matroid.
Let $\psi$ be any \mso\ sentence with $\var{\psi}=\{X_{1},\ldots, X_{k}\}$.
Then \Cref{yogurt} implies that
$M_{1}\oplus M'$ satisfies $\psi$ if and only if
$\mcal{T}(\mcal{M}')$ is $\psi$\dash compatible with
$\mcal{T}(\mcal{M}_{1})=\mcal{T}(\mcal{M}_{2})$, which holds
if and only if $M_{2}\oplus M'$ satisfies $\psi$.
Therefore no $k$\dash certificate exists for $M_{1}$ and $M_{2}$, so they are
$k$\dash equivalent, exactly as desired.
\end{proof}

\section{Amalgams}
\label{amalgams}

Let $M_{1}$ and $M_{2}$ be simple matroids with ground sets
$E_{1}$ and $E_{2}$, rank functions $r_{1}$ and $r_{2}$, and
closure operators $\cl_{1}$ and $\cl_{2}$.
Let $\ell$ be $E_{1}\cap E_{2}$, where we assume that
$M_{1}|\ell=M_{2}|\ell$.
A matroid, $M$, on the ground set $E_{1}\cup E_{2}$
is an \emph{amalgam} of $M_{1}$ and $M_{2}$ if
$M|E_{1}=M_{1}$ and $M|E_{2}=M_{2}$.
A matroid is \emph{modular} if
$r(F)+r(F')=r(F\cap F')+r(F\cup F')$ whenever $F$ and $F'$ are flats.
If $M_{1}|\ell$ is a modular matroid, then
\cite[Theorem~11.4.10]{Oxl11} implies that
\begin{equation}
\label{eqn1}
r(X)=\min\{r_{1}(Y\cap E_{1})+r_{2}(Y\cap E_{2})-r_{1}(Y\cap \ell)\ \colon\
X\subseteq Y\subseteq E_{1}\cup E_{2}\}
\end{equation}
is the rank function of an amalgam of $M_{1}$ and $M_{2}$, known as the
\emph{proper amalgam}.
We denote this amalgam by \amal{M_{1}}{M_{2}}.
Every rank\dash $2$ matroid is modular.
(To see this, note that either $r(F)=r(F')=1$, or one of
$F$ and $F'$ is contained in the other.
Neither of these cases lead to a violation of modularity.)
Henceforth, we consider only the case that $r_{1}(\ell)=2$.
This means that $M_{1}|\ell$ is modular, so that
\amal{M_{1}}{M_{2}} is defined.

\begin{proposition}
\label{jackal}
Assume that $M_{i}$ is a simple matroid with ground set $E_{i}$,
rank function $r_{i}$, and closure operator $\cl_{i}$, for $i=1,2$.
Let $\ell=E_{1}\cap E_{2}$, where $M_{1}|\ell=M_{2}|\ell$ and
$r_{1}(\ell)=2$.
Let $X$ be a subset of $E_{1}\cup E_{2}$.
If $X\cap E_{1}$ is dependent in $M_{1}$, or if
$X\cap E_{2}$ is dependent in $M_{2}$, then
$X$ is dependent in \amal{M_{1}}{M_{2}}.
If $X\cap E_{1}$ is independent in $M_{1}$ and
$X\cap E_{2}$ is independent in $M_{2}$, then
$X$ is dependent in \amal{M_{1}}{M_{2}}\ if and only if
\begin{enumerate}[label=\textup{(\roman*)}]
\item $\ell\subseteq \cl_{1}(X\cap E_{1})$ and
$r_{2}((X-E_{1})\cup \ell)<r_{2}(X-E_{1})+2$,
\item $\ell\subseteq \cl_{2}(X\cap E_{2})$ and
$r_{1}((X-E_{2})\cup \ell)<r_{1}(X-E_{2})+2$, or
\item there is an element $y\in \ell$ such that
$y\in \cl_{1}(X-E_{2}) \cap\cl_{2}(X-E_{1})$.
\end{enumerate}
\end{proposition}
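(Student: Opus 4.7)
The first sentence of the proposition is immediate from $M|E_i = M_i$: if $X \cap E_i$ is dependent in $M_i$, it is dependent in $M = \amal{M_1}{M_2}$, and so is $X$. For the biconditional, write $X_i := X \cap E_i$ and $X_\ell := X \cap \ell$, and assume $X_1, X_2$ are independent in $M_1, M_2$, so $|X| = |X_1| + |X_2| - |X_\ell|$. My strategy is to work directly with the rank formula \eqref{eqn1} and show its minimum is strictly less than $|X|$ if and only if one of (i)--(iii) holds.

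For the forward direction, I plan to exhibit an explicit witness $Y \supseteq X$ for each condition. Under (i), take $Y = X \cup \ell$: the closure hypothesis gives $r_1(Y \cap E_1) = |X_1|$, so \eqref{eqn1} yields $r(X) \leq |X_1| + r_2((X - E_1) \cup \ell) - 2$, which is strictly less than $|X|$ by the second clause of (i). Case (ii) is symmetric. Under (iii), I first note that the witness $y$ cannot lie in $X_\ell$ (otherwise $y \in X_1$ and $y \in \cl_1(X_1 \setminus \{y\})$, contradicting independence of $X_1$), and that $|X_\ell| \leq 1$ (by submodularity for the flats $\cl_1(X_\ell)$ and $\cl_1(X_1 \setminus X_\ell)$, whose intersection has rank $0$; if $|X_\ell| = 2$, every element of $\ell$ would lie in $\cl_1(X_\ell)$, yielding a contradiction). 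Then $Y = X \cup \{y\}$ delivers $r_i(Y \cap E_i) = |X_i|$ and, by simplicity of $M_1|\ell$ together with $r_1(\ell) = 2$, $r_1(Y \cap \ell) = |X_\ell| + 1$, giving $r(X) \leq |X| - 1$.

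The main work is the converse. Assuming $X_1, X_2$ independent and (i)--(iii) all fail, I must verify
\[
r_1(Y \cap E_1) + r_2(Y \cap E_2) - r_1(Y \cap \ell) \geq |X|
\]
for every $Y \supseteq X$. Writing $Y_\ell := Y \cap \ell$, the inclusions $Y \cap E_i \supseteq X_i \cup Y_\ell$ with monotonicity of $r_i$ reduce the task to showing $r_1(X_1 \cup Y_\ell) + r_2(X_2 \cup Y_\ell) - r_1(Y_\ell) \geq |X|$ for every $Y_\ell$ with $X_\ell \subseteq Y_\ell \subseteq \ell$. I plan to split on $r_1(Y_\ell) \in \{0, 1, 2\}$. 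Rank $0$ is immediate, forcing $Y_\ell = X_\ell = \emptyset$. In the rank-$2$ case, $Y_\ell$ spans $\ell$, so $r_i(X_i \cup Y_\ell) \geq r_i(X_i \cup \ell)$, and a short subcase split on whether $\ell \subseteq \cl_i(X_i)$ --- invoking the failure of the second clauses of (i) or (ii) when those inclusions hold, and otherwise using $r_i(X_i \cup \ell) \geq |X_i| + 1$ --- yields the bound. The rank-$1$ case is where (iii) plays its role: simplicity forces $Y_\ell = \{y\}$ for some $y \in \ell$; the subcase $X_\ell = \{y\}$ gives the inequality by direct computation; and in the subcase $X_\ell = \emptyset$ (where $X - E_2 = X_1$ and $X - E_1 = X_2$), a failure of the inequality forces $y \in \cl_1(X_1) \cap \cl_2(X_2) = \cl_1(X - E_2) \cap \cl_2(X - E_1)$, which is precisely the configuration excluded by the failure of (iii).

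The principal obstacle is the bookkeeping: tracking the interaction between $r_1(Y_\ell) \in \{0, 1, 2\}$ and $|X_\ell| \in \{0, 1, 2\}$, and recognising that each of (i)--(iii) corresponds to a distinct obstruction arising at a specific rank of $Y_\ell$.
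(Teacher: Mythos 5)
Your proposal is correct, and its ``if'' direction is essentially the paper's: the same witnesses $Y=X\cup\ell$ for (i)/(ii) and $Y=X\cup\{y\}$ for (iii) are fed into the rank formula \eqref{eqn1}, with your submodularity argument for $y\notin X$ and $|X\cap\ell|\le 1$ replacing the paper's circuit-exchange argument (both are valid, since $M_1|\ell$ is simple of rank $2$, so any two of its elements span $\ell$ and any three form a circuit). Where you genuinely diverge is the organisation of the converse. The paper assumes $X$ is dependent, picks a \emph{minimal} $Y$ violating \eqref{eqn1}, and extracts structural facts about $Y-X$: it lies in $\ell$, is nonempty, each $y\in Y-X$ lies in $\cl_{1}((Y-y)\cap E_{1})\cap\cl_{2}((Y-y)\cap E_{2})$ but not in $\cl_{1}((Y-y)\cap\ell)$, whence $|X\cap\ell|<|Y\cap\ell|<3$, and a case split on $|X\cap\ell|$ and $|Y\cap\ell|$ then lands in (i) or (iii). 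You instead prove the contrapositive: monotonicity reduces every $Y\supseteq X$ to one of the form $X\cup Y_{\ell}$ with $Y_{\ell}\subseteq\ell$, and you verify the lower bound $r_{1}(X_{1}\cup Y_{\ell})+r_{2}(X_{2}\cup Y_{\ell})-r_{1}(Y_{\ell})\ge|X|$ by cases on $r_{1}(Y_{\ell})\in\{0,1,2\}$, feeding the failure of (i)/(ii) into the rank\dash $2$ case and the failure of (iii) into the rank\dash $1$ case (simplicity handles rank $0$ and forces $|Y_{\ell}|=1$ at rank $1$). The two routes perform the same computation on \eqref{eqn1}, but yours trades the paper's minimal-counterexample analysis for a uniform exhaustive bound, which avoids reasoning element-by-element about $Y-X$; the paper's minimality argument, in exchange, reads off directly which of (i)--(iii) a given dependent set triggers. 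All the subcases you sketch do close up as you indicate, so the outline is a complete and correct alternative proof.
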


\begin{proof}
If $X\cap E_{1}$ is dependent in $M_{1}$, then $X\cap E_{1}$
is dependent in \amal{M_{1}}{M_{2}},
since $\amal{M_{1}}{M_{2}}|E_{1}=M_{1}$.
By symmetry, $X$ is dependent in \amal{M_{1}}{M_{2}} if
$X\cap E_{1}$ is dependent in $M_{1}$ or if
$X\cap E_{2}$ is dependent in $M_{2}$.
Henceforth we assume that
$X\cap E_{1}$ is independent in $M_{1}$ and
$X\cap E_{2}$ is independent in $M_{2}$.

Assume statement (i) holds.
Let $Y$ be $X\cup \ell$.
Then
\begin{linenomath}
\begin{align*}
|X|&=|X \cap E_{1}|+|X-E_{1}|\\
&=r_{1}(X\cap E_{1})+r_{2}(X-E_{1})\\
&>r_{1}(X\cap E_{1})+r_{2}((X-E_{1})\cup\ell)-2\\
&=r_{1}(Y\cap E_{1})+r_{2}(Y\cap E_{2})-r_{1}(Y\cap \ell),
\end{align*}
\end{linenomath}
so by \eqref{eqn1}, the rank of $X$ in \amal{M_{1}}{M_{2}} is less than $|X|$,
as desired.
By symmetric arguments, we see that if (i) or (ii) holds, then $X$ is
dependent in \amal{M_{1}}{M_{2}}.

Next we assume that (iii) holds.
Since $X\cap E_{1}$ contains no circuits of $M_{1}$ it follows that
$y$ is not in $X$.
If $X\cap \ell$ contains distinct elements, $u$ and $v$, then
by performing circuit elimination on $\{y,u,v\}$ and a circuit
contained in $(X-E_{2})\cup y$ that contains $y$, we
obtain a circuit of $M_{1}$ contained in $X\cap E_{1}$.
This contradiction means that $|X\cap \ell|\in\{0,1\}$.
Let $Y$ be $X\cup y$.
Then
\begin{linenomath}
\begin{align*}
|X|&=|X\cap E_{1}|+|X\cap E_{2}|-|X\cap \ell |\\
&=r_{1}(X\cap E_{1})+r_{2}(X\cap E_{2})-|X\cap \ell |\\
&=r_{1}(Y\cap E_{1})+r_{2}(Y\cap E_{2})-(r_{1}(Y\cap \ell)-1)\\
&>r_{1}(Y\cap E_{1})+r_{2}(Y\cap E_{2})-r_{1}(Y\cap \ell).
\end{align*}
\end{linenomath}
Again we see that $X$ is dependent in \amal{M_{1}}{M_{2}}, and
this completes the proof of the `if' direction.

For the `only if' direction, we assume that $X$ is
dependent in \amal{M_{1}}{M_{2}}.
As $X\cap E_{1}$ is independent in $M_{1}$ and
$X\cap E_{2}$ is independent in $M_{2}$, it follows that
$X$ is contained in neither $E_{1}$ nor $E_{2}$.
There is some set $Y$ such that $X\subseteq Y\subseteq E_{1}\cup E_{2}$
and $|X|>r_{1}(Y\cap E_{1})+r_{2}(Y\cap E_{2})-r_{1}(Y\cap \ell)$.
Assume that amongst all such sets, $Y$ has been chosen so that
it is as small as possible.
If $y$ is an element in $Y-(X\cup E_{2})$, then we could replace
$Y$ with $Y-y$.
Therefore no such element exists.
By symmetry it follows that $Y-X\subseteq \ell$.
If $Y=X$, then $Y\cap E_{1}$ is independent in $M_{1}$ and
$Y\cap E_{2}$ is independent in $M_{2}$, so
$|X|>r_{1}(Y\cap E_{1})+r_{2}(Y\cap E_{2})-r_{1}(Y\cap \ell)=|Y|=|X|$.
This contradiction means that there is an element, $y$, in $Y-X$.
The minimality of $Y$ means that
\begin{linenomath}
\begin{multline*}
r_{1}(Y\cap E_{1})+r_{2}(Y\cap E_{2})-r_{1}(Y\cap \ell)\\
<r_{1}((Y-y)\cap E_{1})+r_{2}((Y-y)\cap E_{2})-r_{1}((Y-y)\cap \ell).
\end{multline*}
\end{linenomath}
It follows that $y$ is in $\cl_{1}((Y-y)\cap E_{1})$ and
$\cl_{2}((Y-y)\cap E_{2})$, but not
$\cl_{1}((Y-y)\cap \ell)$.
We combine the observations in this paragraph to
deduce that $|X\cap \ell |<|Y\cap \ell|<3$.

Assume that $|X\cap \ell |=1$, so that $|Y \cap\ell |=2$
and $Y=X\cup y$.
Let $x$ be the element in $X\cap\ell$.
Since $\cl_{1}(X\cap E_{1})=\cl_{1}((Y-y)\cap E_{1})$ contains $x$ and $y$,
it contains $\ell$.
As $y$ is in $\cl_{2}((X-E_{1})\cup x)$, it follows that
$r_{2}((X-E_{1})\cup \ell)=r_{2}((X-E_{1})\cup x)<r_{2}(X-E_{1})+2$.
Therefore statement (i) holds.

Now we assume that $|X\cap \ell|=0$.
If $Y\cap \ell=\{y\}$, then $Y=X\cup y$, and $y$ is in
\[
\cl_{1}((Y-y)\cap E_{1})\cap \cl_{2}((Y-y)\cap E_{2})
=\cl_{1}(X-E_{2})\cap\cl_{2}(X-E_{1}),
\]
so statement (iii) holds.
Therefore we assume that $Y\cap \ell =\{y,y'\}$,
and hence $Y=X\cup\{y,y'\}$.
Earlier statements imply that
\begin{linenomath}
\begin{multline*}
y\in \cl_{1}((X\cap E_{1})\cup y')\cap\cl_{2}((X\cap E_{2})\cup y')\quad
\text{and}\quad\\
y'\in \cl_{1}((X\cap E_{1})\cup y)\cap\cl_{2}((X\cap E_{2})\cup y).
\end{multline*}
\end{linenomath}

If $y$ is in neither $\cl_{1}(X\cap E_{1})$ nor $\cl_{2}(X\cap E_{2})$, then
$r_{1}(Y\cap E_{1})=r_{1}((X\cap E_{1})\cup y)=r_{1}(X\cap E_{1})+1$,
and similarly, $r_{2}(Y\cap E_{2})=r_{2}(X\cap E_{2})+1$.
But this means that
\[
r_{1}(Y\cap E_{1})+r_{2}(Y\cap E_{2})-r_{1}(Y\cap \ell)
=r_{1}(X\cap E_{1})+r_{2}(X\cap E_{2})=|X|,
\]
which is a contradiction.
Hence, by using symmetry, we can assume that
$y$ is in $\cl_{1}(X\cap E_{1})$.
This means that $y'$, and hence $\ell$, is contained in
$\cl_{1}(X\cap E_{1})$.
Also,
\[
r_{2}((X-E_{1})\cup \ell)=
r_{2}((X-E_{1})\cup y)<
r_{2}(X-E_{1})+2
\]
so statement (i) holds, and the proof is complete.
\end{proof}

\section{Gain-graphic matroids}
\label{gain}

In this section we introduce two families of matroids via gain graphs.
Let $G$ be an undirected graph (possibly containing loops and multiple edges)
with edge set $E$ and vertex set $V$.
Define $A(G)$ to be the following subset of $E\times V\times V$:
\begin{linenomath}
\begin{multline*}
\{(e,u,v)\colon e\ \text{is a non-loop edge joining}\ u\ \text{and}\ v\}\\
\cup \{(e,u,u)\colon e\ \text{is a loop incident with}\ u\}.
\end{multline*}
\end{linenomath}
A \emph{gain graph} (over the group $H$) is a pair $(G,\sigma)$,
where $G$ is a graph, and $\sigma$ is a function from $A(G)$ to $H$,
such that $\sigma(e,u,v)=\sigma(e,v,u)^{-1}$
for every non-loop edge $e$ with end-vertices $u$ and $v$.
We say that $\sigma$ is a \emph{gain function}.
If $C=v_{0}e_{0}v_{1}e_{2}\cdots e_{t}v_{t+1}$ is a cycle of $G$,
where $v_{0}=v_{t+1}$,
then $\sigma(C)$ is defined to be
\[
\sigma(e_{0},v_{0},v_{1})\sigma(e_{1},v_{1},v_{2})\cdots\sigma(e_{t},v_{t},v_{t+1}).
\]
Note that, in general, $H$ may be nonabelian, and
the value of $\sigma(C)$ depends on the choice of starting
point and orientation for $C$;
however, if $\sigma(C)$ is equal to the identity of $H$, then this equality will
hold no matter which starting point and orientation we choose.
In this case, we say that $C$ is \emph{balanced}.
A cycle that is not balanced is \emph{unbalanced}.

The \emph{gain-graphic} matroid $M(G,\sigma)$ has the edge set of $G$ as its
ground set.
The circuits of $M(G,\sigma)$ are exactly the edge sets of balanced cycles,
along with the minimal edge sets that induce connected subgraphs
containing at least two unbalanced cycles and no balanced cycles.
Any such subgraph is either a \emph{theta graph}, a \emph{loose handcuff},
or a \emph{tight handcuff}.
A theta graph consists of two vertices joined by three internally-disjoint paths;
a loose handcuff consists of two vertex-disjoint cycles joined
by a single path that intersects the cycles only in its end-vertices; and
a tight handcuff consists of two edge-disjoint cycles that share exactly
one vertex.

Assume that $(G,\sigma)$ is a gain graph, where $\sigma$
takes $A(G)$ to the multiplicative group of a field, \mbb{K}.
Let $v_{1},\ldots, v_{m}$ and $e_{1},\ldots, e_{n}$ be orderings of the
vertex and edge sets of $G$.
We define a matrix, $D(G,\sigma)$, with entries from \mbb{K}.
The columns of $D(G,\sigma)$ are labelled by $e_{1},\ldots, e_{n}$.
Let $b_{1},\ldots, b_{m}$ be the standard basis vectors.
Assume that $e_{i}$ is incident with vertices $v_{j}$ and $v_{k}$,
where $j\leq k$.
(If $e_{i}$ is a loop, then $j=k$.)
The column labelled by $e_{i}$ is equal to
$b_{j}-\sigma(e_{i},v_{j},v_{k})b_{k}$.
Note that if $e_{i}$ is a balanced loop, then column $e_{i}$
is the zero vector, and if $e_{i}$ is an unbalanced loop, then
the column contains a single non-zero entry.

\begin{proposition}[Theorem~2.1 of \cite{Zas03}]
\label{noodle}
Let $(G,\sigma)$ be a gain graph over the multiplicative group
of the field \mbb{K}.
The matrix $D(G,\sigma)$ represents the matroid $M(G,\sigma)$
over \mbb{K}.
\end{proposition}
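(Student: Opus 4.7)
The plan is to verify that the matroid of linear dependencies of the columns of $D(G,\sigma)$ coincides with $M(G,\sigma)$ by matching the rank function on every edge set. Since distinct connected components of $G$ contribute to disjoint sets of rows of $D(G,\sigma)$, and since every circuit of $M(G,\sigma)$ is contained in a single component, I would first reduce to the case that the set of columns under consideration corresponds to the edges of a connected subgraph $H$ on vertex set $V(H) \subseteq V(G)$. The target rank is then $|V(H)|-1$ if every cycle in $H$ is balanced, and $|V(H)|$ otherwise.

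The computation proceeds in three building blocks. First, for any forest $F$, an easy induction that peels off leaf edges shows that the corresponding columns are linearly independent: a leaf edge contributes the only nonzero entry in its leaf row, so it cannot appear in a linear dependence. Consequently, a spanning tree of $H$ contributes rank $|V(H)|-1$. Second, for a balanced cycle $C = v_0 e_0 v_1 \cdots e_{t-1} v_t$ with $v_t = v_0$ and $\sigma(C) = 1$, I would produce explicit coefficients $c_0, \ldots, c_{t-1}$, given as suitably signed partial products of gain values read along $C$, for which the linear combination $\sum c_i \cdot (\text{column of } e_i)$ vanishes at every vertex. The check at $v_0$ is precisely the balance identity $\sigma(C)=1$. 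Third, repeating this propagation along an unbalanced cycle produces a contradiction at the closing vertex, so the columns of an unbalanced cycle are independent; similarly, an unbalanced loop at a vertex $v$ contributes a column with a single $1$ in row $v$ and zeros elsewhere, which raises the rank whenever $v$ is not already spanned.

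Assembling these, the rank of the columns indexed by $E(H)$ is $|V(H)| - 1$ when $H$ is balanced and $|V(H)|$ when $H$ contains an unbalanced cycle or loop, matching $r_{M(G,\sigma)}(E(H))$. For the circuit statement, the balanced-cycle construction produces the unique (up to scaling) dependence on a balanced cycle, and no proper subset carries a dependence because it is a forest. For a minimal connected subgraph with two unbalanced cycles and no balanced cycle (a theta graph or a handcuff), the dependence is built by superposing the \emph{failed} propagations from each unbalanced cycle: after rescaling, the residual single-entry vectors at the junction vertices cancel, yielding a genuine linear dependence on the whole configuration, while every proper subset is either a forest or a forest plus a single unbalanced cycle, and hence independent.

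The main obstacle I anticipate is the bookkeeping for theta graphs and handcuffs in the final step: one must track three internally disjoint paths meeting at two vertices (for theta) or two unbalanced cycles joined by a path (for handcuff), and verify that the scalars produced by the failed closures along each branch can be simultaneously rescaled so as to cancel at the shared vertices. Once the sign and orientation conventions built into the definition of $D(G,\sigma)$ are carefully handled, this is an extension of the balanced-cycle calculation, and it pins down exactly the circuit families described in the definition of $M(G,\sigma)$.
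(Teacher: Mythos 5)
The paper does not prove this proposition at all: it is quoted verbatim from Zaslavsky (Theorem~2.1 of the cited paper on biased graphs), so there is no in-paper argument to compare against. Your blind proof is a correct outline of the standard verification, essentially the classical argument behind Zaslavsky's theorem: reduce to a connected edge set $H$, use leaf-peeling to show forest columns are independent, telescope partial products of gains around a cycle so that the putative dependence closes up exactly when $\sigma(C)=1$ (the residual being $(1-\sigma(C))$ times the indicator of the base vertex), and then superpose the nonzero residuals of two unbalanced cycles, transported along the connecting path (or along the three branches of a theta), to produce the dependence on a handcuff or theta graph. Combined with the observation that proper subsets of these configurations are forests or unicyclic graphs with unbalanced cycle, this identifies the circuits, and the component-wise block structure of $D(G,\sigma)$ handles the reduction to connected subgraphs. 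Two small points you gloss over and should make explicit in a full write-up: first, the rank formula $r_{M(G,\sigma)}(E(H))\in\{|V(H)|-1,|V(H)|\}$ is not given in the paper's circuit-based definition of $M(G,\sigma)$ and must itself be derived from that definition (e.g.\ by exhibiting a spanning tree, respectively a spanning connected unicyclic subgraph whose cycle is unbalanced, as a maximal circuit-free set); second, in the balanced case the upper bound $|V(H)|-1$ on the column rank needs the remark that every non-tree edge appears with nonzero coefficient in the dependence carried by its (balanced) fundamental cycle, or alternatively a row dependence coming from a potential function. Both are routine, and with the orientation bookkeeping you already flag, your plan goes through.
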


Next we construct two families of gain graphs.
Let \mbb{K} be a field.
The gain functions of the two families will be into the
multiplicative group of \mbb{K}.
Let $s\geq 3$ be an integer, and let
$\alpha$ be an element in $\mbb{K}-\{0\}$ with order greater than $s$.
The gain graph $\Gamma(\mbb{K},s,\alpha)$
has vertex set $\{u_{1},\ldots, u_{s+1}\}$.
Each vertex $u_{i}$ in $\{u_{2},\ldots, u_{s}\}$ is incident with a loop,
$a_{i}$.
In addition, $u_{1}$ is incident with the loop $a$, and $u_{s+1}$
is incident with the loop $b$.
The parallel edges $x_{i}$ and $y_{i}$ join $u_{i}$ and $u_{i+1}$
for each $i$ in $\{1,\ldots, s\}$.
Moreover, the edges $x$, $y$, and $z$
join $u_{1}$ and $u_{s+1}$.
We define the gain function, $\sigma$,
so that it takes
each loop to $\alpha$ and each $x_{i}$ to $1$.
Furthermore, $\sigma(y_{i},u_{i},u_{i+1})=\alpha$ for
each $i$ in $\{1,\ldots, s\}$, 
while $\sigma(x,u_{1},u_{s+1})=1$,
$\sigma(y,u_{1},u_{s+1})=\alpha^{s-1}$,
and $\sigma(z,u_{1},u_{s+1})=\alpha^{s}$.

Now let $t\geq 3$ be an integer.
We let $\beta$ be an element in $\mbb{K}-\{0\}$ with order greater
than $2t(t-1)$.
We construct the gain graph $\Delta(\mbb{K},t,\beta)$.
It has $\{v_{1},\ldots,v_{2t}\}$ as its vertex set.
Each vertex $v_{i}\in\{v_{2},\ldots, v_{2t-1}\}$ is incident with a loop, $b_{i}$,
while $v_{1}$ is incident with the loop $a$ and $v_{2t}$ is
incident with the loop $b$.
For each $i\in \{1,\ldots, 2t-1\}$, the edges
$e_{i}$ and $f_{i}$ join $v_{i}$ to $v_{i+1}$.
The edges $x$, $y$, $z$, and $g$
join the vertices $v_{1}$ and $v_{2t}$.
The gain function, $\sigma$, takes each loop to $\beta$,
and each edge $e_{i}$ to $1$.
The triple $(f_{i},v_{i},v_{i+1})$ is taken to $\beta^{t-1}$ when
$i\in \{1,\ldots, t\}$, and to $\beta^{t}$ when $i\in\{t+1,\ldots,2t-1\}$.
Thus $t$ of the edges $f_{1},\ldots, f_{2t-1}$ receive the label
$\beta^{t-1}$, and the other $t-1$ receive the label $\beta^{t}$.
The values of $\sigma(x,v_{1},v_{2t})$, $\sigma(y,v_{1},v_{2t})$,
$\sigma(z,v_{1},v_{2t})$, and
$\sigma(g,v_{1},v_{2t})$
are $1$, $\beta^{t-1}$, $\beta^{t}$, and $\beta^{t(t-1)}$,
respectively.

\Cref{fig1} shows $\Gamma(\mbb{K},s,\alpha)$ and
$\Delta(\mbb{K},t,\beta)$.
The edge labels of loops have been omitted.
Every edge label corresponds to the orientation of the edge
shown in the drawing.

\begin{figure}[htb]
\centering
\includegraphics{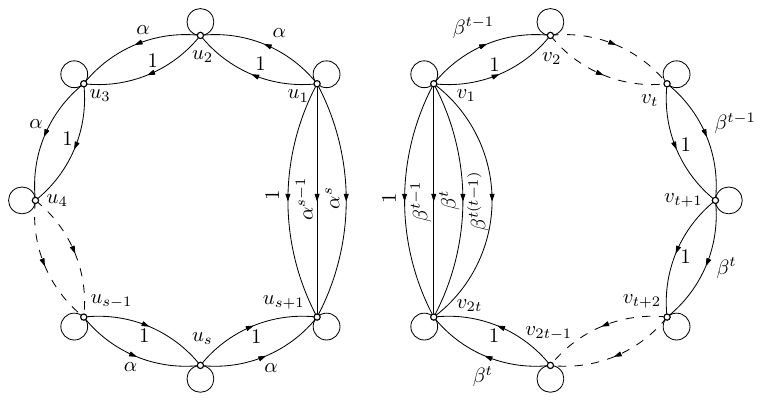}
\caption{The gain graphs $\Gamma(\mbb{K},s,\alpha)$ and
$\Delta(\mbb{K},t,\beta)$.}
\label{fig1}
\end{figure}

Note that whenever $M=M(\Gamma(\mbb{K},s,\alpha))$ and
$M'=M(\Delta(\mbb{K},t,\beta))$, then
$E(M)\cap E(M')=\{a,b,x,y,z\}$.
Let $\ell$ be this intersection.
Then $M|\ell$ and $M'|\ell$ are both isomorphic to
$U_{2,5}$, so the discussion in \Cref{amalgams} implies that
\amal{M}{M'} is defined.

If $G$ is a graph and $X$ is a set of edges, then $G[X]$ denotes the
subgraph of $G$ containing the edges in $X$ and all vertices that
are incident with at least one edge in $X$.

\begin{lemma}
\label{velvet}
Let \mbb{K} be a field, let $s\geq 3$ be an integer, and let
$\alpha$ be an element in $\mbb{K}-\{0\}$ with order greater than
$2s(s-1)$.
Let $M$ be $M(\Gamma(\mbb{K},s,\alpha))$ and let
$M'$ be $M(\Delta(\mbb{K},s,\alpha))$.
Then \amal{M}{M'} is \mbb{K}\dash representable.
\end{lemma}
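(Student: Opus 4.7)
My plan is to construct an explicit $\mbb{K}$-representation of $\amal{M}{M'}$ by gluing the canonical gain-graphic representations of $M$ and $M'$ provided by \Cref{noodle} along their common rank-$2$ flat $\ell = \{a, b, x, y, z\}$.

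By \Cref{noodle}, the matrices $D_1 := D(\Gamma(\mbb{K}, s, \alpha))$ and $D_2 := D(\Delta(\mbb{K}, s, \alpha))$ (with $t = s$ and $\beta = \alpha$) represent $M$ and $M'$ over $\mbb{K}$. I would first unpack the definition of $D$ to check that the columns of $D_1$ indexed by $\ell$ are supported only on rows $u_1, u_{s+1}$ and, restricted to those two coordinates, read $(1,0)$, $(0,1)$, $(1,-1)$, $(1,-\alpha^{s-1})$, $(1,-\alpha^s)$ for $a, b, x, y, z$ respectively. An identical calculation shows that the $\ell$-columns of $D_2$ are supported on rows $v_1, v_{2s}$ and carry the same five vectors there --- this is forced by the parallel gain-label choices in the definitions of $\Gamma$ and $\Delta$. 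I would then form the block-style matrix $D$ over $\mbb{K}$ whose rows are indexed by $V(\Gamma)\cup V(\Delta)$ modulo the identifications $u_1 \sim v_1$ and $u_{s+1} \sim v_{2s}$, and whose columns are indexed by $E(M) \cup E(M')$ with the five $\ell$-columns identified (well-defined by the previous sentence). Each $E(M)$-column is supported on the $V(\Gamma)$-rows and each $E(M')$-column on the $V(\Delta)$-rows, so the matroid $M[D]$ represented by $D$ satisfies $M[D]|E(M) = M$ and $M[D]|E(M') = M'$; that is, $M[D]$ is an amalgam of $M$ and $M'$.

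The main obstacle is verifying that $M[D]$ is in fact the \emph{proper} amalgam $\amal{M}{M'}$. I would do this by comparing the dependent sets of $M[D]$ against the characterisation in \Cref{jackal}. If $X \cap E(M)$ or $X \cap E(M')$ is dependent in its factor, then $X$ is automatically dependent in $M[D]$, matching the first clause of \Cref{jackal}. Otherwise, letting $C_i$ denote the column span of $X \cap E_i$ in $D$, elementary dimension-counting gives that $X$ is dependent in $M[D]$ if and only if $\dim(C_1 \cap C_2) > |X \cap \ell|$, and $C_1 \cap C_2$ lies in the $2$-dimensional shared subspace $V_\ell$ spanned by the two identified rows. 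A case analysis on $|X \cap \ell| \in \{0,1,2\}$ then matches the three conditions of \Cref{jackal}: the key observations are that $V_\ell \subseteq C_i$ is equivalent to $\ell \subseteq \cl_i(X \cap E_i)$, and that a $1$-dimensional subspace of $V_\ell$ lying in $C_i$ corresponds to the existence of the element $y$ appearing in condition (iii). The bookkeeping of this case analysis is the only real work remaining.
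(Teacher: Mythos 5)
Your construction is in fact the same as the paper's: the glued matrix $D$ you describe is exactly the Zaslavsky matrix $D(H,\sigma\cup\sigma')$ of the gain graph $H$ obtained by identifying $u_{1}$ with $v_{1}$ and $u_{s+1}$ with $v_{2s}$, which is how the paper produces its candidate representable amalgam $N$; and your dimension criterion (with both restrictions independent, $X$ is dependent in $M[D]$ iff $\dim(C_{1}\cap C_{2})>|X\cap\ell|$, and $C_{1}\cap C_{2}\subseteq V_{\ell}$) is correct, as is the equivalence $V_{\ell}\subseteq C_{i}\Leftrightarrow \ell\subseteq\cl_{i}(X\cap E_{i})$. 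The genuine gap is your final step, where you dismiss the comparison with \Cref{jackal} as bookkeeping. The assertion that a $1$\dash dimensional subspace of $V_{\ell}$ lying in the relevant spans ``corresponds to the existence of the element $y$ appearing in condition (iii)'' is false as a formal statement and is precisely the heart of the lemma. Condition (iii) requires the common direction of $\cl_{M}(X-E(M'))$ and $\cl_{M'}(X-E(M))$ inside $V_{\ell}$ to be one of the five specific columns $a,b,x,y,z$, namely $(1,0)$, $(0,1)$, $(1,-1)$, $(1,-\alpha^{s-1})$, $(1,-\alpha^{s})$. But a path from $u_{1}$ to $u_{s+1}$ in $\Gamma(\mbb{K},s,\alpha)$ avoiding $\ell$ contributes a direction $(1,-\alpha^{j})$ with $0\leq j\leq s$, and for $j\notin\{0,s-1,s\}$ this is a line of $V_{\ell}$ contained in $C_{1}$ that corresponds to no element of $\ell$ at all (for example $j=2$ when $s\geq 4$). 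If such a line could also lie in $C_{2}$, then $M[D]$ would have a dependency that the proper amalgam lacks, and $M[D]\neq\amal{M}{M'}$; note that the later results (\Cref{needle}, \Cref{alcove}, \Cref{proton}) all concern the proper amalgam, so proving that $M[D]$ is merely \emph{some} representable amalgam does not suffice.

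Ruling this scenario out is where the specific gain labels and the hypothesis that $\alpha$ has order greater than $2s(s-1)$ must enter, and your proposal never invokes either. One has to argue: the only lines of $V_{\ell}$ arising from the $\Gamma$\dash side are $(1,0)$, $(0,1)$ (via components containing an unbalanced cycle and $u_{1}$ or $u_{s+1}$) and $(1,-\alpha^{j})$ with $0\leq j\leq s$ (via a path), while those from the $\Delta$\dash side are $(1,0)$, $(0,1)$ and $(1,-\alpha^{j'})$ with $j'$ of the form $p(s-1)+qs$ (or $s(s-1)$, via the edge $g$), with $0\leq j'\leq 2s(s-1)$; equality $\alpha^{j}=\alpha^{j'}$ then forces $j=j'$ because the order of $\alpha$ exceeds $2s(s-1)$, and $j=p(s-1)+qs\leq s$ forces $j\in\{0,s-1,s\}$, so the common line is indeed the column of $x$, $y$, or $z$. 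This is exactly the computation the paper performs (in graph language, via balanced cycles, theta graphs and handcuffs, together with the two structural claims handling the cases where a side contains a path versus an unbalanced cycle), and it is arithmetic specific to these gain graphs, not routine case-checking; without it your proof of $M[D]=\amal{M}{M'}$ does not go through. A secondary imprecision: condition (iii) of \Cref{jackal} is about $\cl_{1}(X-E_{2})\cap\cl_{2}(X-E_{1})$, i.e.\ the spans $A_{i}$ of the columns of $X-E_{3-i}$, not the spans $C_{i}$ of $X\cap E_{i}$; this matters when $X\cap\ell\neq\emptyset$ and should be kept straight in the case analysis.
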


\begin{proof}
Let $\ell$ be $\{a,b,x,y,z\}$.
Let $(G,\sigma)$ stand for the signed graph $\Gamma(\mbb{K},s,\alpha)$, and
let $(G',\sigma')$ stand for $\Delta(\mbb{K},s,\alpha)$.
The \namecref{velvet} will follow from \Cref{noodle} if we can prove that
\amal{M}{M'} is gain-graphic over the multiplicative group of \mbb{K}.
To this end, we construct a graph, $H$, by gluing together
$G$ and $G'$.
We identify the vertices $u_{1}$ and $v_{1}$ as the new vertex
$w$, and identify $u_{s+1}$ and $v_{2s}$ as $w'$.
The edge-set of $H$ is exactly the union of the edge-sets of
$G$ and $G'$.
Any edge incident with $u_{1}$ or $v_{1}$ in $G$ or $G'$
is incident with $w$ in $H$, and any edge incident with
$u_{s+1}$ or $v_{2s}$ is incident with $w'$ in $H$.
All other incidences are exactly as in $G$ or $G'$.
Let $e$ be an edge of $G$ or $G'$, and let $u$ and
$v$ be the vertices incident with $e$.
(It may be the case that $u=v$.)
If $u$ is $u_{1}$ or $v_{1}$, then let $\hat{u}$ be
$w$, and if $u=u_{s+1}$ or $v_{2s}$, then let $\hat{u}$ be
$w'$.
Otherwise define $\hat{u}$ to be $u$.
We define $\hat{v}$ in exactly the same way.
We define the function $\theta$ so that
$\theta(e,\hat{u},\hat{v})=\sigma(e,u,v)$ if $e$ is an
edge of $G$, and $\theta(e,\hat{u},\hat{v})=\sigma'(e,u,v)$
if $e$ is an edge of $G'$.
It is clear that $\theta$ is a well-defined gain function
for $H$.

Let $N$ be the gain-graphic matroid $M(H,\theta)$.
We can prove the \namecref{velvet} by checking that
$N$ and $\amal{M}{M'}$ are equal.
We do this by showing that a set, $X$, is dependent in $N$ if and only 
if it is dependent in $\amal{M}{M'}$.
Note that $N$ is obviously an amalgam of $M$ and $M'$.

For the first direction, we assume that $X$ is a circuit in $N$.
As $N$ is an amalgam of $M$ and $M'$, we assume
that $X$ is contained in neither $E(M)$ nor $E(M')$. 
We start by considering the case that $X$ is a balanced cycle
in $(H,\theta)$.
If $X$ contains an edge joining $w$ and $w'$, then
this edge is $g$, and $H[X]$ contains a
path with vertex sequence $w,u_{2},u_{3},\ldots, u_{s},w'$, for
otherwise $X$ is contained in $E(M)$ or $E(M')$.
The product of edge labels along this path is
$\alpha^{j}$, where $j\leq s$.
We also require that $\alpha^{j}=\alpha^{s(s-1)}$, since
$g$ is labelled with $\alpha^{s(s-1)}$, and $X$ is a
balanced cycle.
But $\alpha^{j}=\alpha^{s(s-1)}$ cannot hold, as $\alpha$ has order
greater than $2s(s-1)$, and $s\geq 3$ so $s(s-1)>s$.
Therefore we conclude that $X$ does not contain any edge
between $w$ and $w'$, and since $X$ is not contained
in $E(M)$ or $E(M')$, it follows that it is the edge-set of a
Hamiltonian cycle.
Let $\alpha^{j}$ be the product of edge labels along the path
in $H[X]$ with vertex sequence
$w,u_{2},u_{3},\ldots,u_{s}, w'$.
Thus $0\leq j\leq s$.
Let $\alpha^{p(s-1)+qs}$ be the product of
edge labels along the path in $H[X]$ with vertex sequence
$w,v_{2},v_{3},\ldots, v_{2s-1},w'$, where $p$ and $q$ are
non-negative integers satisfying $0\leq p\leq s$ and
$0\leq q\leq s-1$.
Thus $0\leq p(s-1)+qs\leq 2s(s-1)$ and
$\alpha^{j}=\alpha^{p(s-1)+qs}$, as $X$ is balanced.
As the order of $\alpha$ is greater than $2s(s-1)$, we deduce 
that $j=p(s-1)+qs$, and hence
$j$ is equal to $0$, $s-1$, or $s$.
In these three cases, $x$, $y$,
or $z$ is an element in
$\cl_{M}(X-E(M'))\cap \cl_{M'}(X-E(M))$.
Thus statement (iii) of \Cref{jackal} holds, so $X$ is dependent
in \amal{M}{M'}.

Now we can assume that $X$ does not contain a balanced cycle of
$(H,\theta)$.
Thus $H[X]$ is a theta graph or a handcuff.
Let $\{M_{1},M_{2}\}$ be $\{M,M'\}$.
Assume that $H[X-E(M_{1})]$ is a path from
$w$ to $w'$.
None of the internal vertices of this path has degree
three or more in $H[X]$.
It follows that, regardless of whether $H[X]$ is a theta graph or a handcuff,
$H[X\cap E(M_{1})]$ contains a unbalanced cycle joined
by a path to the loop $a$, and an unbalanced cycle joined by a
path to the loop $b$.
Therefore $\{a,b\}$ (and hence all of $\ell$)
is contained in $\cl_{M_{1}}(X\cap E(M_{1}))$.
Also, $H[(X-E(M_{1}))\cup \{a,b\}]$ is a handcuff, and hence
$(X-E(M_{1}))\cup \{a,b\}$ is a circuit of $M_{2}$ that spans $\ell$.
This means that
$r_{M_{2}}((X-E(M_{1}))\cup \ell)<r_{M_{2}}(X-E(M_{1}))+2$.
Now (i) of \Cref{jackal} holds, so $X$ is dependent in \amal{M}{M'}.

We can now assume that neither $H[X-E(M)]$ nor
$H[X-E(M')]$ is a path from  $w$ to $w'$.
Assume $H[X-E(M)]$ is a forest.
As $H[X]$ has no vertices of degree one, the
forest must be a path, and its end-vertices must be
$w$ and $w'$, contradicting our assumption.
By symmetry, it follows that each of
$H[X-E(M)]$ and $H[X-E(M')]$ contains an unbalanced cycle.
Since $H[X]$ is connected, either
$w$ or $w'$ is on a path from one of these cycles to
the other.
Let us assume the former, since the latter case is identical.
Now $a$ is in a handcuff in $G[(X-E(M'))\cup a]$, and hence in
a circuit of $M$ that is contained in $(X-E(M'))\cup a$.
By symmetry, $a$ is also contained in a circuit of
$M'$ that is contained in $(X-E(M))\cup a$.
Thus statement (iii) of \Cref{jackal} holds, and $X$ is dependent in
\amal{M}{M'}.
We have proved that if $X$ is dependent in $N$, it is dependent in
\amal{M}{M'}.

For the other direction, we assume that $X$ is independent in $N$.
This means that $H[X]$ contains no balanced cycles, and
any connected component of $H[X]$ contains at most one cycle.
Let us assume for a contradiction that $X$ is dependent in
\amal{M}{M'}.
In fact, we can assume that $X$ is a circuit of \amal{M}{M'}.
As $N$ is an amalgam of $M$ and $M'$, it follows that neither
$X\cap E(M)$ nor $X\cap E(M')$ is dependent, so
$X$ is contained in neither $E(M)$ nor $E(M')$.
One of the three statements in \Cref{jackal} must hold.

We prove the following statements for $M$ and $M'$ simultaneously,
by letting $\{M_{1},M_{2}\}$ be $\{M,M'\}$.

\begin{claim}
\label{junket}
The subgraph $H[X-E(M_{1})]$ either contains a
connected component that contains both $w$ and $w'$,
or a connected component that contains a cycle and at least one
of $w$ and $w'$.
\end{claim}

\begin{proof}
Assume the \namecref{junket} is false, so that
any component of $H[X-E(M_{1})]$
contains at most one of $w$ and $w'$,
and any component containing one of these vertices
contains no cycle.
This means that if $p$ and $q$ are distinct elements
of $\ell$, then $H[(X-E(M_{1}))\cup \{p,q\}]$ contains
no balanced cycles, and no theta graphs or handcuffs.
From this it follows that $\cl_{M_{2}}(X-E(M_{1}))$
does not contain any element of $\ell$, so
statement (iii) of \Cref{jackal} does not hold.
Moreover, $r_{M_{2}}((X-E(M_{1}))\cup \ell)
=r_{M_{2}}(X-E(M_{1}))+2$.
As one of the three statements in
\Cref{jackal} must hold, it follows that
$\ell$ is in $\cl_{M_{2}}(X\cap E(M_{2}))$ and
$r_{M_{1}}((X-E(M_{2}))\cup\ell)<r_{M_{1}}(X-E(M_{2}))+2$.
But this now means that $X$ contains at least
two elements of $\ell$, or else
$\ell\nsubseteq \cl_{M_{2}}(X\cap E(M_{2}))$.
Hence $\ell\subseteq\cl_{M_{2}}(X\cap \ell)$, so \Cref{jackal} implies
that $X\cap E(M_{1})$ is dependent in \amal{M}{M'}.
Since we have assumed $X$ is a circuit of \amal{M}{M'}, this means that
$X\subseteq E(M_{1})$, contrary to hypothesis.
Therefore \Cref{junket} holds.
\end{proof}

\begin{claim}
\label{isobar}
There is no connected component of $H[X-E(M_{1})]$
that contains both $w$ and $w'$.
\end{claim}

\begin{proof}
Assume that $H_{0}$ is such a component.
Then $H_{0}$ is contained in a connected
component, $H_{1}$, of $H[X\cap E(M_{2})]$.
If $H_{1}$ contains a cycle, then by applying \Cref{junket}
to $H[X-E(M_{2})]$, we can deduce that
the union of $H_{1}$ with a component of
$H[X-E(M_{2})]$ contains a theta graph or a
handcuff.
We have assumed that $H[X]$ contains no
such subgraph, so this is a contradiction.
Therefore $H_{1}$ contains no cycle, from which we
deduce that $H[X-E(M_{1})]$ is a path from $w$ to $w'$ and
$X\cap \ell=\emptyset$.
Note that $H[(X-E(M_{1}))\cup a]$ contains
no circuit of $M_{2}$, so $\ell\nsubseteq\cl_{M_{2}}(X-E(M_{1}))$.

If there is a component of $H[X-E(M_{2})]$ that
contains $w$ and $w'$, then by the
reasoning in the previous paragraph,
$H[X-E(M_{2})]$ is a path from $w$ to $w'$, and
$\ell\nsubseteq \cl_{M_{1}}(X-E(M_{2}))$.
Therefore $H[X]$ is a Hamiltonian cycle, and the only
statement in \Cref{jackal} that can hold is statement (iii).
We have noted that $a$ (and by symmetry $b$)
is not in $\cl_{M_{2}}(X-E(M_{1}))$, so there is an
edge, $p$, joining $w$ and $w'$, such that
$p$ is in both $\cl_{M_{2}}(X-E(M_{1}))$ and
$\cl_{M_{1}}(X-E(M_{2}))$.
This means that $H[(X-E(M_{1}))\cup p]$ and
$H[(X-E(M_{2}))\cup p]$ are both balanced cycles.
Thus the product of edge labels on the
path $H[X-E(M_{2})]$ is the inverse of the product on
the path $H[X-E(M_{1})]$
(assuming that we travel in a consistent direction around the
Hamiltonian cycle $H[X]$).
Hence $X$ is a balanced cycle, a contradiction.
Therefore no component of $H[X-E(M_{2})]$ contains
$w$ and $w'$.

Recall that $X\cap\ell=\emptyset$ and neither $a$ nor $b$ is in
$\cl_{M_{2}}(X-E(M_{1}))$.
As one of the statements from
 \Cref{jackal} must hold, either there is an edge
between $w$ and $w'$ that is in both
$\cl_{M_{2}}(X-E(M_{1}))$ and $\cl_{M_{1}}(X-E(M_{2}))$,
or $\cl_{M_{1}}(X-E(M_{2}))$ contains $\ell$.
Therefore in either case we can let $p$ be an edge between $w$ and
$w'$ that is in $\cl_{M_{1}}(X-E(M_{2}))$.
Let $C$ be a circuit of $M_{1}$ contained in
$(X-E(M_{2}))\cup p$ that contains $p$.
No component of $H[X-E(M_{2})]$ contains
$w$ and $w'$ so $H[C-p]$ is not connected.
It follows that $H[C]$ is a loose handcuff, and $p$ is an edge in the path between
the two cycles.
Therefore $H[X-E(M_{2})]$ contains two distinct components,
each containing a cycle and one of $w$ and $w'$.
As $H[X-E(M_{1})]$ is a path from $w$ to $w'$,
it follows that $H[X]$ is a handcuff, a contradiction.
\end{proof}

We have shown that neither $H[X-E(M)]$
nor $H[X-E(M')]$ contains a component that contains
$w$ and $w'$.
By using \Cref{junket}, symmetry, and the fact that $X$ contains
no handcuffs, we can assume the following:
there is a component of $H[X-E(M_{2})]$ that contains
$w$ and a cycle, and any component that contains
$w'$ contains no cycle;
similarly, there is a component of $H[X-E(M_{1})]$ that
contains $w'$ and a cycle, and any component
that contains $w$ contains no cycle.
It follows from this assumption (and the fact that $H[X]$ contains no
handcuffs) that $X\cap\ell=\emptyset$.
Notice that $a$ is the only element in
\[
\cl_{M_{1}}(X-E(M_{2}))\cap \ell=\cl_{M_{1}}(X\cap E(M_{1}))\cap\ell.
\]
Similarly,  $\cl_{M_{2}}(X\cap E(M_{2}))\cap \ell=\{b\}$.
Therefore none of the statements in \Cref{jackal} can hold,
so we have a contradiction.

Now it follows that if $X$ is independent in $N$ it is also
independent in \amal{M}{M'}, so $N=\amal{M}{M'}$,
exactly as desired.
This completes the proof of \Cref{velvet}.
\end{proof}

\begin{lemma}
\label{alcove}
Let \mbb{K} be a field and let $s$ and $t$ be distinct integers
satisfying $s,t\geq 3$.
Let $\alpha$ be an element in $\mbb{K}-\{0\}$ with order greater than
$\max\{s,2t(t-1)\}$.
Let $M$ be $M(\Gamma(\mbb{K},s,\alpha))$ and let
$M'$ be $M(\Delta(\mbb{K},t,\alpha))$.
Then \amal{M}{M'} is not representable over any field.
\end{lemma}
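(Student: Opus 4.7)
The plan is by contradiction: if $\amal{M}{M'}$ has a representation over some field $\mbb{F}$, then restricting to $E(M)$ and $E(M')$ yields $\mbb{F}$\dash representations of $M$ and $M'$ that agree on the shared rank\dash $2$ flat $\ell=\{a,b,x,y,z\}$. I will show that any $\mbb{F}$\dash representation of $M$ forces the five labeled points of $\ell$ into the normalized form $(0,\infty,1,\gamma^{s-1},\gamma^{s})$ for some $\gamma\in\mbb{F}-\{0\}$ with $\mathrm{ord}(\gamma)>s$, and any $\mbb{F}$\dash representation of $M'$ forces them into $(0,\infty,1,\delta^{t-1},\delta^{t})$ for some $\delta\in\mbb{F}-\{0\}$ with $\mathrm{ord}(\delta)>2t(t-1)$. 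Compatibility of the two normalizations on the labeled flat $\ell$ will then give $\gamma^{s-1}=\delta^{t-1}$ and $\gamma^{s}=\delta^{t}$, hence $\gamma=\delta$ and $\gamma^{s-t}=1$; so $\mathrm{ord}(\gamma)$ divides $|s-t|$, while the two order conditions give $\mathrm{ord}(\gamma)>\max\{s,t\}>|s-t|$, the desired contradiction.

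To carry out the analysis for $M$, take any $\mbb{F}$\dash representation. The unbalanced loops $a_{1}=a,a_{2},\ldots,a_{s},a_{s+1}=b$ form a basis of $M$, so after a row change of basis we can take the representing vectors $v_{a_{i}}=e_{i}$ to be the standard basis of $\mbb{F}^{s+1}$. Each handcuff circuit $\{a_{i},x_{i},a_{i+1}\}$ and $\{a_{i},y_{i},a_{i+1}\}$ forces $v_{x_{i}},v_{y_{i}}$ onto the line spanned by $e_{i}$ and $e_{i+1}$; successively rescaling $e_{2},\ldots,e_{s+1}$ and column\dash scaling the $x_{i}$ lets us assume $v_{x_{i}}=e_{i}-e_{i+1}$, and then column\dash scaling each $y_{i}$ gives $v_{y_{i}}=e_{i}+\sigma_{i}e_{i+1}$ for some $\sigma_{i}\in\mbb{F}-\{0,-1\}$. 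The balanced cycle $\{x_{1},\ldots,x_{s},x\}$ is a circuit, and solving its linear dependency forces $v_{x}\propto e_{1}-e_{s+1}$. For each $k\in\{1,\ldots,s\}$ the cycle $\{y_{1},\ldots,y_{k-1},x_{k},y_{k+1},\ldots,y_{s},y\}$ is balanced (labels multiply to $\alpha^{s-1}\cdot\alpha^{-(s-1)}=1$) and its dependency gives $v_{y}\propto e_{1}+(-1)^{s}\bigl(\prod_{j\neq k}\sigma_{j}\bigr)e_{s+1}$; consistency across all $k$ forces $\sigma_{1}=\cdots=\sigma_{s}$. Writing $\gamma=-\sigma_{1}$ and applying the analogous balanced circuit $\{y_{1},\ldots,y_{s},z\}$ to $z$, we obtain $v_{y}\propto e_{1}-\gamma^{s-1}e_{s+1}$ and $v_{z}\propto e_{1}-\gamma^{s}e_{s+1}$. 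The projective change of coordinate $w\mapsto-w$ on the line $\cl(\{a,b\})$ then sends the five labeled points to $(0,\infty,1,\gamma^{s-1},\gamma^{s})$. Finally, for each $k\in\{1,\ldots,s\}$ some cycle of the form $\{\epsilon_{1},\ldots,\epsilon_{s},x\}$ with exactly $k$ of the $\epsilon_{i}$ equal to a $y_{i}$\dash edge is unbalanced in $M$, so the corresponding set of vectors must be linearly independent; the resulting inequality is $\gamma^{k}\neq 1$, so $\mathrm{ord}(\gamma)>s$.

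An entirely parallel analysis applied to $\Delta(\mbb{K},t,\alpha)$ shows that in any $\mbb{F}$\dash representation of $M'$ the points of $\ell$ take the normalized form $(0,\infty,1,\delta^{t-1},\delta^{t})$ for some $\delta\in\mbb{F}-\{0\}$ with $\mathrm{ord}(\delta)>2t(t-1)$. The larger order bound here arises because the extra closing edge $g$ in $\Delta$ carries the label $\beta^{t(t-1)}$, and the exponents appearing along paths from $v_{1}$ to $v_{2t}$ (modulo the closing label) range up to $2t(t-1)$ in absolute value; requiring the corresponding cycles to be balanced or unbalanced exactly as in $M'$ forces $\mathrm{ord}(\delta)>2t(t-1)$. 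Since the representation of $\amal{M}{M'}$ restricts to representations of $M$ and $M'$ that induce the same configuration on the labeled flat $\ell$, the two normalized tuples must coincide, giving $\gamma^{s-1}=\delta^{t-1}$ and $\gamma^{s}=\delta^{t}$; dividing yields $\gamma=\delta$, and back\dash substitution gives $\gamma^{s-t}=1$. As $s\neq t$, $\mathrm{ord}(\gamma)$ divides the positive integer $|s-t|$, while $\mathrm{ord}(\gamma)>s$ and $\mathrm{ord}(\gamma)=\mathrm{ord}(\delta)>t$ force $\mathrm{ord}(\gamma)>\max\{s,t\}>|s-t|$, a contradiction.

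The main obstacle is the structural step that shows any $\mbb{F}$\dash representation of $M$ (and of $M'$) must take the single\dash parameter gain\dash graphic form described above, with the parameter of order strictly greater than $s$ (respectively $2t(t-1)$). The work lies in isolating enough balanced cycles of $\Gamma(\mbb{K},s,\alpha)$ to collapse the $s$ parameters $\sigma_{i}$ to a common value, and enough unbalanced cycles of each relevant exponent to force the order bound on $\gamma$; the analogous work for $\Delta(\mbb{K},t,\alpha)$ is similar but must accommodate the extra closing edge $g$ and the corresponding larger range of path exponents.
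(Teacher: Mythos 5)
Your overall route is essentially the paper's: reconstruct the hypothetical representation as a gain graph using the loop basis, use the balanced cycles (circuits) to pin the gains down to powers of a single parameter, match the two sides on the flat $\ell$, and then use unbalanced cycles (independent sets) to rule out the resulting small order. The only real difference is packaging: you normalise $M$ and $M'$ separately and compare cross-ratio data on $\ell$, whereas the paper works with one gain graph $(G,\sigma)$ for the whole amalgam; these are interchangeable.

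There is, however, one step that is not merely unproved but false as stated: the claim that \emph{every} $\mbb{F}$\dash representation of $M'=M(\Delta(\mbb{K},t,\alpha))$ forces $\mathrm{ord}(\delta)>2t(t-1)$. The only constraints a representation of $M'$ imposes on $\delta$ are that $\delta^{d}\neq 1$ for the differences $d$ of \emph{realisable} exponents, namely $d=p(t-1)+qt-j$ with $0\leq p\leq t$, $0\leq q\leq t-1$, $j\in\{0,t-1,t,t(t-1)\}$ (plus the differences of pairs of closing edges). Unlike the $\Gamma$ side, these do not sweep out all of $\{1,\dots,2t(t-1)\}$: for instance $2t(t-1)-1$ is not such a difference (and no multiple of it is), so nothing excludes a representation in which $\delta$ has order $2t(t-1)-1$; the gain graph $\Delta$ with a parameter of that order yields the same matroid $M'$. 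Your hand-waved justification ("exponents range up to $2t(t-1)$, so the order exceeds $2t(t-1)$") conflates the range of exponents with the set of forced non-identities. Fortunately your final contradiction needs only $\mathrm{ord}(\delta)>t$ (so that $\mathrm{ord}(\gamma)=\mathrm{ord}(\delta)>|s-t|$), and this weaker bound is true and provable by your own method: for $1\leq m\leq t$ the cycle $\{f_{1},\ldots,f_{m},e_{m+1},\ldots,e_{2t-1},x\}$ is unbalanced in $\Delta$, hence independent in $M'$, forcing $\delta^{m(t-1)}\neq 1$ and so $\mathrm{ord}(\delta)\neq m$; together with $\delta\neq 1$ (from $y\not\parallel z$) this gives $\mathrm{ord}(\delta)>t$ — which is exactly the family of cycles the paper invokes in its endgame. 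One further small point to make explicit in your "parallel analysis" of $M'$: to write the positions of $y$ and $z$ as $\delta^{t-1},\delta^{t}$ for a single $\delta$ you need the two balanced cycles through $g$, which give $\delta_{y}^{t}=\delta_{g}=\delta_{z}^{t-1}$, and then $\delta:=\delta_{z}/\delta_{y}$ works since $\gcd(t-1,t)=1$ (alternatively, skip this and match $\delta_{y}=\gamma^{s-1}$, $\delta_{z}=\gamma^{s}$ directly, which yields $\gamma^{(s-1)t}=\gamma^{s(t-1)}$ and hence $\gamma^{s-t}=1$).
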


\begin{proof}
Let us assume that the matrix $D$ represents \amal{M}{M'}
over the field \mbb{L}.
Let $B$ be the set $\{a_{2},\ldots, a_{s},a,b,b_{2},\ldots,b_{2t-1}\}$.
Thus $B$ is the set of all loops in
$\Gamma(\mbb{K},s,\alpha)$ and $\Delta(\mbb{K},t,\alpha)$.
It is clear that $B\cap E(M)$ and $B\cap E(M')$ are independent in
$M$ and $M'$, and moreover,
$r_{M}((B-E(M'))\cup\ell)=r_{M}(B-E(M'))+2$ and
$r_{M'}((B-E(M))\cup\ell)=r_{M'}(B-E(M))+2$.
Now it follows easily from \Cref{jackal} that $B$ cannot be dependent
in \amal{M}{M'}.
If $e$ is any element of the ground set of \amal{M}{M'} that is not
in $B$, then $B\cup e$ contains a circuit of either
$M$ or $M'$, and this circuit has cardinality three.
From this it follows that $B$ is a basis of \amal{M}{M'}.
We can assume that the columns of $D$ labelled by the
elements of $B$ form an identity matrix.
As the fundamental circuits relative to $B$ all have cardinality
three, every column of $D$ contains either one or two
non-zero elements.
By scaling, we can assume that the first non-zero entry
in each column is $1$.
Thus $D=D(G,\theta)$, for some gain graph $(G,\theta)$ over the
multiplicative group of \mbb{L}.
By examining the fundamental circuits relative to $B$, we see that
$G$ is the graph in \Cref{fig2}.

\begin{figure}[htb]
\centering
\includegraphics{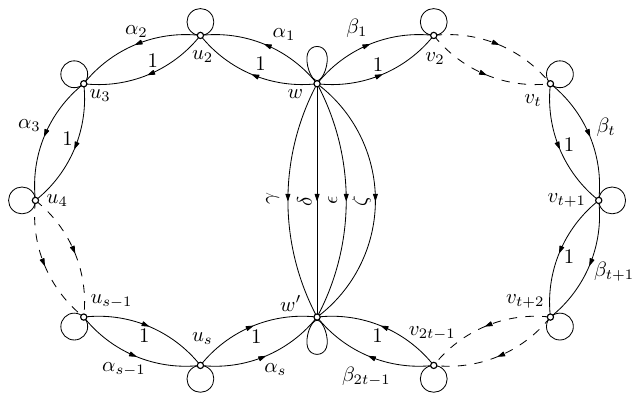}
\caption{The gain graph $(G,\theta)$.}
\label{fig2}
\end{figure}

By scaling rows of $D$, we can assume that
\[
\theta(x_{1},w,u_{2})=
\theta(x_{s},u_{s},w')=
\theta(e_{i},w,v_{2})=1.
\]
Moreover, we can also assume that
$\theta(x_{i},u_{i},u_{i+1})=1$ for each $i=2,\ldots,s-1$
and that $\theta(e_{i},v_{i},v_{i+1})=1$ for each $i=2,\ldots, 2t-2$.
Note that $\{x_{1},\ldots, x_{s},x\}$ is a
balanced cycle in $\Gamma(\mbb{K},s,\alpha)$, and that
$\{e_{1},\ldots, e_{2t-1},x\}$ is a balanced cycle in
$\Delta(\mbb{K},t,\alpha)$.
It now follows from \Cref{jackal} that
$\{x_{1},\ldots, x_{s},e_{1},\ldots, e_{2t-1}\}$ is dependent in
\amal{M}{M'}, and we deduce that it is the edge-set of a balanced
cycle in $(G,\theta)$.
This in turn implies that $\theta(e_{2t-1},v_{2t-1},w')=1$.

For $i\in \{2,\ldots, s-1\}$, let $\alpha_{i}$ be
the value $\theta(y_{i},u_{i},u_{i+1})$.
Define $\alpha_{1}$ to be $\theta(y_{1},w,u_{2})$ and
$\alpha_{s}$ to be $\theta(y_{s},u_{s},w')$.
Similarly, for $i\in\{2,\ldots, 2t-2\}$, let $\beta_{i}$ be
$\theta(f_{i},v_{i},v_{i+1})$.
Define $\beta_{1}$ to be $\theta(f_{1},w,v_{2})$, and
let $\beta_{2t-1}$ be $\theta(f_{2t-1},v_{2t-1},w')$.
Let $\gamma$, $\delta$, $\epsilon$, and $\zeta$ be
$\theta(x,w,w')$,
$\theta(y,w,w')$,
$\theta(z,w,w')$, and
$\theta(g,w,w')$, respectively.

Because $\{x_{1},\ldots, x_{s},x\}$ is a balanced cycle
in $\Gamma(\mbb{K},s,\alpha)$, and hence a circuit in
\amal{M}{M'}, it follows that it is also a balanced cycle
in $(G,\theta)$.
This means that $\gamma=1$.
Next we notice that $(\{y_{1},\ldots, y_{s}\}-y_{i})\cup \{x_{i},y\}$
is a balanced cycle of $\Gamma(\mbb{K},s,\alpha)$
and hence of $(G,\theta)$, for any $i$ in $\{1,\ldots, s\}$.
The product of edge labels on this cycle in $(G,\theta)$ is
$\alpha_{1}\cdots\alpha_{s}\alpha_{i}^{-1}\delta^{-1}$,
which implies that 
$\alpha_{i}=\alpha_{1}\cdots\alpha_{s}\delta^{-1}$
for any $i\in\{1,\ldots, s\}$.
Let $\alpha$ stand for $\alpha_{1}\cdots\alpha_{s}\delta^{-1}$,
so that $\alpha_{i}=\alpha$ for any $i\in\{1,\ldots, s\}$, and
$\delta=\alpha^{s-1}$.
As $\{y_{1},\ldots, y_{s},z\}$ is a balanced cycle,
it follows that $\epsilon=\alpha^{s}$.

Next we observe that $(\{e_{1},\ldots, e_{2t-1}\}-e_{i})\cup \{f_{i},y\}$
is a balanced cycle in $\Delta(\mbb{K},t,\alpha)$, and hence
in $(G,\theta)$, for any $i\in\{1,\ldots, t\}$.
Thus $\beta_{i}=\delta=\alpha^{s-1}$ for any such $i$.
Similarly, $(\{e_{1},\ldots, e_{2t-1}\}-e_{i})\cup \{f_{i},z\}$
is a balanced cycle for any $i\in\{t+1,\ldots, 2t-1\}$, from
which we deduce that $\beta_{i}=\epsilon=\alpha^{s}$.

As $\{f_{1},\ldots, f_{t},e_{t+1},\ldots, e_{2t-1},g\}$ and
$\{e_{1},\ldots, e_{t},f_{t+1},\ldots, f_{2t-1},g\}$ are both
balanced cycles in $\Delta(\mbb{K},t,\alpha)$,
it now follows that the products
$\beta_{1}\cdots\beta_{t}=(\alpha^{s-1})^{t}$ and
$\beta_{t+1}\cdots\beta_{2t-1}=(\alpha^{s})^{t-1}$ are
both equal to $\zeta$.
Thus $\alpha^{st-t}=\alpha^{st-s}$, implying
$\alpha^{s}=\alpha^{t}$.
Let $o$ be the order of $\alpha$ in \mbb{L}.
Since $s\ne t$, we know that $o<\max\{s,t\}$.
But if $o<s$, then
$\{y_{1},\ldots, y_{o},x_{o+1},\ldots, x_{s},x\}$ is a balanced cycle
in $(G,\theta)$, although it is not a circuit in $M$.
Therefore $o<t$.
Now the product of edge labels on the cycle
$\{f_{1},\ldots, f_{o},e_{o+1},\ldots, e_{2t-1},x\}$ is
$(\alpha^{s-1})^{o}=1$, so this is a balanced cycle
in $(G,\theta)$, although not a circuit in $M'$.
This contradiction proves the \namecref{alcove}.
\end{proof}

\section{Proof of Lemma~1.4}
\label{secondlemma}

This section is dedicated to proving \Cref{sentry},
which we restate with an explicit bound.
Let $k$ be a positive integer.
Define $g_{2}(k,0)$ to be $2^{k^{2}}3^{k}7^{2k}$.
Recursively define $g_{2}(k,n+1)$ to be $2^{g_{2}(k,n)}$, and let
$f_{2}(k)$ be $g_{2}(k,k)$.
Recall that  $\ell$ is the set $\{a,b,x,y,z\}$, and
$\mcal{M}_{\ell}$ is the class of matroids having a
$U_{2,5}$\dash restriction on $\ell$.
A pair of matroids is $(k,\ell)$\dash equivalent
if they have no $(k,\ell)$\dash certificate, as defined
in the introduction.

\begin{lemma}
\label{candle}
Let $k$ be a positive integer.
There are at most $f_{2}(k)$ equivalence classes of $\mcal{M}_{\ell}$
under the relation of $(k,\ell)$\dash equivalence.
\end{lemma}

\begin{proof}
The main ideas required here are essentially identical to those in
\Cref{firstlemma}, so we omit many details.
A \emph{registry} is a $(k+2)\times k$ matrix with
columns indexed by the variables $X_{1},\ldots, X_{k}$, and
rows indexed by $\mathrm{Ind}$, $\mathrm{Sing}$, and
$X_{1},\ldots, X_{k}$.
As before, an entry in row $X_{i}$ is either `T'  or `F', and
an  entry in row $\mathrm{Sing}$ is either `$0$', `$1$', or `$>$'.
Let \mcal{A} be the set
\[
\{\text{D},\text{S}\}\cup
\{\alpha\colon \alpha \subseteq \ell,\ |\alpha|\leq 2\}\cup\\
\{(\alpha,\beta)\colon \alpha,\beta \subseteq \ell,\ |\alpha|,|\beta|\leq 1,\ 
\alpha\cap\beta=\emptyset\}.
\]
A registry entry in row $\mathrm{Ind}$ must be a member of \mcal{A}.
A simple calculation shows that $|\mcal{A}|=49$.
Therefore there are at most $2^{k^{2}}3^{k}49^{k}=g_{2}(k,0)$
possible registries.
A \emph{depth\dash $0$ tree} is a registry, and a
\emph{depth\dash $(n+1)$ tree} is a non-empty set of depth\dash $n$ trees.
Hence there are no more than $f_{2}(k)$ depth\dash $k$ trees.

A \emph{stacked matroid} is a tuple, $\mcal{M}=(M,Y_{1},\ldots, Y_{m})$,
where $M$ is in $\mcal{M}_{\ell}$, and each $Y_{i}$ is a subset of $E(M)$.
If $||\mcal{M}||=m\leq k$, then we associate a depth\dash $(k-||\mcal{M}||)$
tree, $\mcal{T}(\mcal{M})$ to $\mcal{M}$.
We give the definition of $\mcal{T}(\mcal{M})$
only in the case that $\mcal{T}(\mcal{M})$ is a registry,
because otherwise the definition is identical to that in \Cref{fiesta}.
Assume that $\mcal{M}=(M,Y_{1},\ldots, Y_{k})$.
The entry in row $X_{i}$ and column $X_{j}$ of the registry
$\mcal{T}(\mcal{M})$ is `T' if and only if $Y_{i}\subseteq Y_{j}$.
The entry in row $\mathrm{Sing}$ and column $X_{i}$
is `$0$', `$1$', or `$>$', according to whether $|Y_{i}|$ is less than,
equal to, or greater than one.

The rules defining the entries in row $\mathrm{Ind}$
are more complicated.
Let $\omega$ stand for the entry in row $\mathrm{Ind}$ and column $X_{j}$.
If $Y_{j}$ is dependent in $M$, then we set $\omega$ to be `D'.
Now we assume that $Y_{j}$ is independent.
Let $\pi$ be the integer
$r_{M}(Y_{j}-\ell)-r_{M}(Y_{j}\cup\ell)+2$.
This is known as the \emph{local connectivity} of $Y_{j}-\ell$ and $\ell$.
The submodularity of the rank function shows that
$\pi\geq 0$, and since $r_{M}(Y_{j}-\ell)\leq r_{M}(Y_{j}\cup \ell)$,
it follows that $\pi\leq 2$.
If $\pi=2$, then $Y_{j}-\ell$ spans $\ell$, and we set $\omega$ to be `S'.
In the next case, we assume that $\pi=0$.
Certainly $|Y_{j}\cap \ell|\leq 2$, as we have assumed that
$Y_{j}$ is independent in $M$.
So $Y_{j}\cap\ell$ is in \mcal{A},
and we set $\omega$ to be $Y_{j}\cap\ell$.
Finally, we consider the case that $\pi=1$.
Thus $|Y_{j}\cap\ell|\leq 1$, for otherwise
\[r_{M}(Y_{j})=r_{M}(Y_{j}\cup\ell)=r_{M}(Y_{j}-\ell)-\pi+2
=|Y_{j}-\ell|+1<|Y_{j}|,\]
which contradicts our assumption that $Y_{j}$ is independent in $M$.
We let $\beta$ be the set $Y_{j}\cap \ell$.
Let $\alpha$ be $\cl_{M}(Y_{j}-\ell)\cap \ell$.
Note that $\alpha\cap\beta=\emptyset$, as otherwise
$Y_{j}$ contains a circuit of $M$.
Moreover,
\[r_{M}(\alpha)\leq r_{M}(\cl_{M}(Y_{j}-\ell))+r_{M}(\ell)-r_{M}(\cl_{M}(Y_{j}-\ell)\cup\ell)
=\pi=1,\]
so $|\alpha|\leq 1$.
Therefore $(\alpha,\beta)$ is in \mcal{A}, and we set $\omega$
to be $(\alpha,\beta)$.

Let $\psi$ be an \mso\ formula such that
either $\psi$ is quantifier-free, or $\var{\psi}=\{X_{1},\ldots, X_{k}\}$.
Let $b(\psi)$ be the number of bound variables in $\psi$,
and let $\mcal{T}$ and $\mcal{T}'$ be depth\dash $b(\psi)$ trees.
We will define what it means for $\mcal{T}$ and $\mcal{T}'$
to be $\psi$\dash compatible.
We give the definition only in the case that $b(\psi)=0$
and $\psi$ is the atomic formula $\mathrm{Ind}(X_{j})$:
otherwise the definition is identical to that in \Cref{fiesta}.
Let $\omega$ and $\omega'$ be the entries of
$\mcal{T}$ and $\mcal{T}'$ in row $\mathrm{Ind}$ and column $X_{j}$.
It easiest to define the rules that determine the
$\psi$\dash compatibility of $\mcal{T}$ and $\mcal{T}'$ via a flowchart,
which is exactly what we do in \Cref{fig4}.
When following this flowchart, we start in the shaded cell.
A terminal node that is hollow signifies that
$\mcal{T}$ and $\mcal{T}'$ are $\psi$\dash compatible.
A filled terminal node signifies that they are not.
Note that if $\omega$ is not `D' or `S', then it is either
a subset of $\ell$, or a pair $(\alpha,\beta)$, where
$\alpha$ and $\beta$ are subsets of $\ell$.
The same comment applies to $\omega'$.

\begin{figure}[htb]
\centering
\includegraphics[scale=1.1]{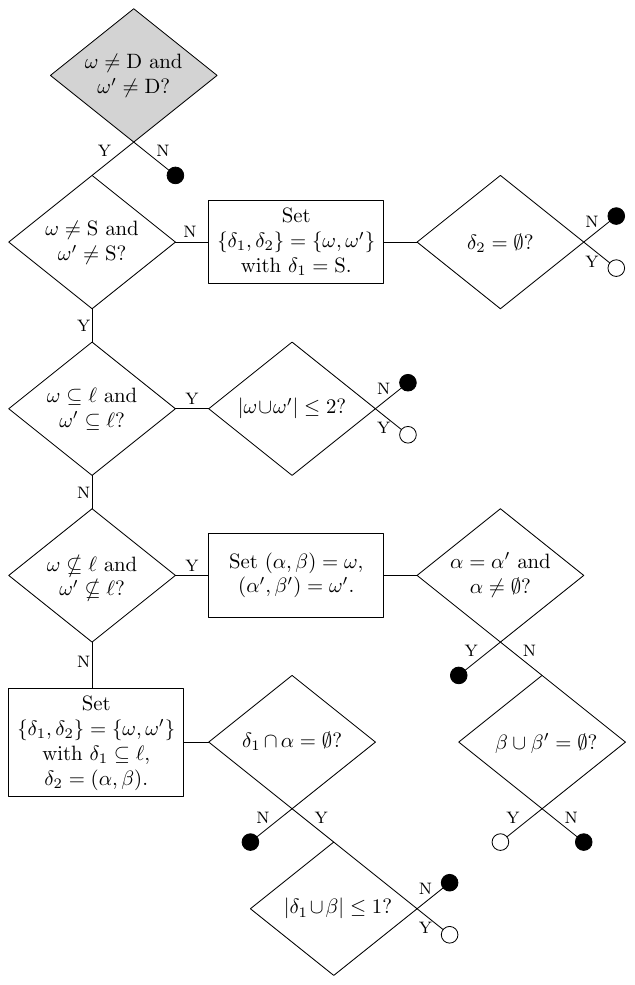}
\caption{Deciding whether $\mcal{T}$ and $\mcal{T}'$ are
$\psi$-compatible.}
\label{fig4}
\end{figure}

\begin{claim}
\label{window}
Let $\psi$ be an \mso\ formula such that either
$\psi$ is quantifier-free, or $\var{\psi}=\{X_{1},\ldots, X_{k}\}$.
If $\var{\psi}=\{X_{1},\ldots, X_{k}\}$, then let $m$ be
$|\fr{\psi}|$ and assume that $\fr{\psi}=\{X_{1},\ldots, X_{m}\}$.
Otherwise, let $m$ be $k$.
Let $M$ and $M'$ be matroids in $\mcal{M}_{\ell}$ satisfying
$E(M)\cap E(M')=\ell$, and let
$\mcal{M}=(M,Y_{1},\ldots, Y_{m})$ and
$\mcal{M}'=(M',Y_{1}',\ldots, Y_{m}')$ be stacked matroids.
Define $\tau$ to be the function that takes $X_{i}$ to
$Y_{i}\cup Y_{i}'$, for each $X_{i}\in \fr{\psi}$.
The interpretation $(\amal{M}{M'},\tau)$ satisfies $\psi$
if and only if the trees, $\mcal{T}(\mcal{M})$ and
$\mcal{T}(\mcal{M}')$, are $\psi$\dash compatible.
\end{claim}

\begin{proof}
The proof of this claim differs from that of \Cref{yogurt} only
in the base case when $\psi$ is the atomic formula
$\mathrm{Ind}(X_{j})$.
Therefore we need only consider this case.
Let $\psi$ be the formula $\mathrm{Ind}(X_{j})$.
Let $\omega$ be the entry in row $\mathrm{Ind}$ and
column $X_{j}$ of the registry $\mcal{T}(\mcal{M})$,
and let $\omega'$ be the corresponding entry of
$\mcal{T}(\mcal{M}')$.
We will trace all possible outcomes in the flowchart
shown in \Cref{fig4}.
We will prove that if
$\mcal{T}(\mcal{M})$ and
$\mcal{T}(\mcal{M}')$ are $\psi$\dash compatible,
then $Y_{j}\cup Y_{j}'$ is independent in \amal{M}{M'},
whereas if they are not $\psi$\dash compatible, then
$Y_{j}\cup Y_{j}'$ is dependent.
This will establish the \namecref{window}.
Let $X$ be the set $Y_{j}\cup Y_{j}'$.

If either $\omega$ or $\omega'$ is `D', then either
$Y_{j}$ is dependent in $M$, or $Y_{j}'$ is dependent in $M'$.
In this case $\mcal{T}(\mcal{M})$ and
$\mcal{T}(\mcal{M}')$ are not $\psi$\dash compatible,
and $X$ is certainly dependent in \amal{M}{M'}.
Therefore we will assume that $\omega\ne\text{D}$ and
$\omega'\ne\text{D}$, so $Y_{j}$ is independent in $M$
and $Y_{j}'$ is independent in $M'$.

In the next case, we assume that either $\omega$ or $\omega'$
is `S'.
By symmetry, we can assume that $\omega=\text{S}$.
Then $Y_{j}-\ell$ spans $\ell$ in $M$.
Since $Y_{j}$ is independent in $M$, we observe that
$Y_{j}-\ell=Y_{j}$.
Assume that $\omega'\ne\emptyset$, so that
$\mcal{T}(\mcal{M})$ and
$\mcal{T}(\mcal{M}')$ are not $\psi$\dash compatible.
If $\omega'$ is a non-empty subset of $\ell$, then
$\omega'=Y_{j}'\cap\ell$, and it follows that an element of
$Y_{j}'$ is in the closure of $Y_{j}-\ell$ in $M$, so that $X$ is dependent.
If $\omega'$ is not a subset of $\ell$, then
$r_{M'}(Y_{j}'-\ell)-r_{M'}(Y_{j}'\cup \ell)+2>0$, meaning that
$r_{M'}((X-E(M))\cup\ell)<r_{M'}(X-E(M))+2$.
Thus \Cref{jackal} implies that $X$ is dependent in \amal{M}{M'}.
On the other hand, if $\omega'=\emptyset$, then
$\mcal{T}(\mcal{M})$ and
$\mcal{T}(\mcal{M}')$ are $\psi$\dash compatible.
Furthermore, $r_{M'}(Y_{j}'-\ell)-r_{M'}(Y_{j}'\cup \ell)+2=0$
and $Y_{j}'\cap\ell=\emptyset$, meaning that $Y_{j}'-\ell=Y_{j}'$.
Now we know that $X\cap \ell=\emptyset$, so that
$X\cap E(M)$ is independent in $M$ and $X\cap E(M')$ is
independent in $M'$.
The fact that $r_{M'}(Y_{j}')+2=r_{M'}(Y_{j}'\cup\ell)$
implies that $\cl_{M'}(Y_{j}')\cap\ell=\emptyset$.
None of the statements in \Cref{jackal} apply, so $X$ is
independent in \amal{M}{M'}.

We now follow the branch of the flowchart in which
$\omega\ne\text{S}$ and $\omega'\ne\text{S}$.
This means that neither $Y_{j}-\ell$ nor $Y_{j}'-\ell$ spans
$\ell$.
Assume that $\omega$ and $\omega'$ are both subsets of $\ell$.
This implies that
$r_{M}(Y_{j}-\ell)-r_{M}(Y_{j}\cup \ell)+2$ and
$r_{M'}(Y_{j}'-\ell)-r_{M'}(Y_{j}'\cup \ell)+2$ are both zero.
From this we deduce that $\cl_{M}(Y_{j}-\ell)\cap\ell$
and $\cl_{M'}(Y_{j}'-\ell)\cap\ell$ are empty.
Assume that $|\omega\cup \omega'|> 2$.
Then $\mcal{T}(\mcal{M})$ and
$\mcal{T}(\mcal{M}')$ are not $\psi$\dash compatible.
As $\ell$ is a rank\dash $2$ set, obviously it follows that
$X\cap E(M)$ and $X\cap E(M')$ are dependent.
Therefore we assume that $|\omega\cup \omega'|\leq 2$, so that
$\mcal{T}(\mcal{M})$ and
$\mcal{T}(\mcal{M}')$ are $\psi$\dash compatible.
As $r_{M}(Y_{j}-\ell)=r_{M}(Y_{j}\cup\ell)-2$,
and $X\cap\ell=\omega\cup\omega'$ contains at most
two elements, we see that $X\cap E(M)$ is independent in $M$.
By exactly the same argument, $X\cap E(M')$ is independent in $M'$.
The information we have assembled in this paragraph is enough
to determine that none of the statements in \Cref{jackal} apply, so
$X$ is independent in \amal{M}{M'}.

Next we consider the branch where neither $\omega$ nor $\omega'$
is a subset of $\ell$.
This means that both
$r_{M}(Y_{j}-\ell)-r_{M}(Y_{j}\cup\ell)+2$ and
$r_{M'}(Y_{j}'-\ell)-r_{M'}(Y_{j}'\cup\ell)+2$ are equal to one.
Let $\omega$ be $(\alpha,\beta)$, where
$\alpha$ and $\beta$ are disjoint subsets of $\ell$ of
size at most one, and similarly assume that
$\omega'=(\alpha',\beta')$.
Assume that $\alpha=\alpha'$ and that
$\alpha\ne\emptyset$, so that
$\mcal{T}(\mcal{M})$ and
$\mcal{T}(\mcal{M}')$ are not $\psi$\dash compatible.
The single element in $\alpha$ belongs to both
$\cl_{M}(Y_{j}-\ell)$ and $\cl_{M'}(Y_{j}'-\ell)$.
Statement (iii) of \Cref{jackal} now implies that $X$ is dependent.
Thus we assume that either $\alpha\ne \alpha'$, or
$\alpha=\alpha'=\emptyset$.
Assume that $\beta\cup\beta'\ne\emptyset$, so that
$\mcal{T}(\mcal{M})$ and
$\mcal{T}(\mcal{M}')$ are not $\psi$\dash compatible.
By symmetry, we will assume that $\beta\ne\emptyset$, and
$e$ is the single element in $\beta$.
Then $e$ is in $Y_{j}\cap\ell$, but not in $\cl_{M}(Y_{j}-\ell)$.
Since $r_{M}(Y_{j}\cup\ell)=r_{M}(Y_{j}-\ell)+1$, we now see that
$Y_{j}$ spans $\ell$ in $M$.
As $r_{M'}(Y_{j}'\cup\ell)=r_{M'}(Y_{j}-\ell)+1$, \Cref{jackal}
tells us that $X$ is dependent.
On the other hand, if $\beta\cup\beta=\emptyset$, then
$\mcal{T}(\mcal{M})$ and
$\mcal{T}(\mcal{M}')$ are $\psi$\dash compatible and
$X\cap\ell$ is empty, which means that $X\cap E(M)$
is independent in $M$ and $X\cap E(M')$ is independent in $M'$.
Earlier we followed the branch in which neither
$\cl_{M}(Y_{j}-\ell)$ nor $\cl_{M}(Y_{j}'-\ell)$ contains
$\ell$.
It follows that neither $\cl_{M}(X\cap E(M))$ nor $\cl_{M'}(X\cap E(M'))$
contains $\ell$.
There is no element of $\ell$ in both
$\cl_{M}(Y_{j}-\ell)$ nor $\cl_{M}(Y_{j}'-\ell)$, since in that
case the element would be in $\alpha$ and $\alpha'$.
Therefore \Cref{jackal} implies that $X$ is independent.

Finally we arrive at the branch of the flowchart where exactly one of
$\omega$ and $\omega'$ is a subset of $\ell$.
By symmetry, we will assume that $\omega'\subseteq\ell$ and
$\omega=(\alpha,\beta)$, where $\alpha$ and $\beta$ are disjoint
subsets of $\ell$ of size at most one.
If there is an element of $\omega'$ in $\alpha$, then this element
is in $(Y_{j}'\cap\ell)\cap\cl_{M}(Y_{j}-\ell)$, which implies that
$X\cap E(M)$ is dependent in $M$.
As $\mcal{T}(\mcal{M})$ and
$\mcal{T}(\mcal{M}')$ are not $\psi$\dash compatible in this branch,
this is the desired outcome.
Therefore we assume that $\omega'\cap\alpha=\emptyset$.
Assume that $\omega'\cup\beta$ contains
distinct elements, $e$ and $f$.
This means that $\mcal{T}(\mcal{M})$ and
$\mcal{T}(\mcal{M}')$ are not $\psi$\dash compatible.
We have just assumed that $\omega'\cap\alpha=\emptyset$,
from which it follows that $e$ is not in $\cl_{M}(Y_{j}-\ell)$.
As $r_{M}(Y_{j}-\ell)=r_{M}(Y_{j}\cup\ell)-1$, we
deduce that $r_{M}((Y_{j}-\ell)\cup e)=r_{M}(Y_{j}\cup\ell)$.
Therefore $(Y_{j}-\ell)\cup e$ spans $f$ in $M$, so
$X\cap E(M)$ is dependent.
Now we assume that $\omega'\cup\beta$ contains at most
one element.
Therefore $\mcal{T}(\mcal{M})$ and
$\mcal{T}(\mcal{M}')$ are $\psi$\dash compatible.
Since $\omega'\cap\alpha=\emptyset$, it follows easily
that $(Y_{j}-\ell)\cup(\omega'\cup\beta)=X\cap E(M)$
is independent in $M$.
Similarly, $X\cap E(M')=(Y_{j}'-\ell)\cup(\omega'\cup\beta)$ is 
independent in $M'$.
Because $r_{M'}(Y_{j}'-\ell)-r_{M'}(Y_{j}'\cup\ell)+2=0$,
there is no element in $\cl_{M'}(Y_{j}'-\ell)\cap\ell$.
\Cref{jackal} implies that the only way $X$ can be dependent in
\amal{M}{M'} is if $\ell$ is contained in $\cl_{M'}(X\cap E(M'))$.
But this is impossible, as
$r_{M'}(Y_{j}-\ell)=r_{M'}(Y_{j}'\cup\ell)-2$, and there is at
most one element in $X\cap \ell$.
Therefore $X$ is independent in \amal{M}{M'}, exactly as desired.
\end{proof}

We complete the proof of \Cref{candle} by observing that
\Cref{window} implies that the number of $(k,\ell)$\dash equivalence
classes is bounded above by the number of depth\dash $k$ trees.
\end{proof}

We can now prove \Cref{proton} and \Cref{casava,pulsar}.

\begin{proof}[Proof of \textup{\Cref{proton}}.]
Let \mbb{K} be an infinite field.
Assume that $\psi_{\mbb{K}}$ is a sentence in \mso\
characterising \mbb{K}\dash representable matroids.
Observe that \mbb{K} contains non-zero elements with
arbitrarily large order:
to see this, assume that the order of every element in $\mbb{K}-\{0\}$
is bounded above by the integer $K$.
Then every element in $\mbb{K}-\{0\}$ is a root of the
polynomial $(x^{K}-1)(x^{K-1}-1)\cdots(x-1)$.
Since there are only finitely many such roots, \mbb{K} is finite.
This contradiction proves our claim.

Let $k$ be $|\var{\psi_{\mbb{K}}}|$.
We apply \Cref{candle}.
Choose the element $\alpha\in\mbb{K}-\{0\}$ with high enough
order so that there are at least $f_{2}(k)+1$ integers, $s$, such that
$s\geq 3$ and $2s(s-1)$ is less than the order of $\alpha$.
Then there are two distinct integers, $s$ and $t$, satisfying these
constraints, such that
$M_{1}=M(\Gamma(\mbb{K},s,\alpha))$ and
$M_{2}=M(\Gamma(\mbb{K},t,\alpha))$ are $(k,\ell)$\dash equivalent.
We let $M'$ be $M(\Delta(\mbb{K},s,\alpha))$.
Then $\psi_{\mbb{K}}$ is satisfied
by both of \amal{M_{1}}{M'} and \amal{M_{2}}{M'}, or by neither.
However, the first of these amalgams is
\mbb{K}\dash representable by \Cref{velvet}, and the
second is not representable over any field at all,
by \Cref{alcove}.
This contradiction completes the proof of the \namecref{proton}.
\end{proof}

\begin{proof}[Proof of \textup{\Cref{casava}}.]
Let $\{\psi_{q}\}_{q\in \mcal{Q}}$ be a set of sentences
characterising $\mathrm{GF}(q)$\dash representability, and
assume that $N$ is an integer such that
$|\var{\psi_{q}}|\leq N$ for all $q\in \mcal{Q}$.
Recall that if $q\in\mcal{Q}$, then the multiplicative
group of $\mathrm{GF}(q)$ has an element of order $q-1$.
We apply \Cref{candle}.
Choose $q\in\mcal{Q}$ large enough so that there are
least $f_{2}(N)+1$ integers, $s$, satisfying
$s\geq 3$ and $2s(s-1)<q-1$.
Let $\alpha$ be a generator of the multiplicative
group of $\mathrm{GF}(q)$.
Assume that $\psi_{q}$ contains
$k\leq N$ variables.
As $f_{2}(N)+1\geq f_{2}(k)+1$, there
are distinct integers, $s$ and $t$, such that
$s,t\geq 3$ and $2s(s-1),2t(t-1)<q-1$
and $M=M(\Gamma(\mbb{K},s,\alpha))$ and
$M'=M(\Gamma(\mbb{K},t,\alpha))$ are
$(k,\ell)$\dash equivalent.
Now we obtain a contradiction from
\Cref{velvet,alcove} exactly as before.
\end{proof}

\begin{proof}[Proof of \textup{\Cref{pulsar}}.]
If \mbb{K} is an infinite field with characteristic $c$, then
\mbb{K} contains elements of arbitrarily high order, so
all matroids of the form $M_{1}=M(\Gamma(\mbb{K},s,\alpha))$ and
$M_{2}=M(\Gamma(\mbb{K},t,\alpha))$ are $\mbb{K}$\dash representable.
Therefore the proof proceeds exactly as in \Cref{proton}.
\end{proof}

\section{Acknowledgements}

We thank Noam Greenberg for helpful advice and the
referees for their constructive feedback.
The research in this article was supported by
the Rutherford Discovery Fellowship,
the Marsden Fund of New Zealand, and
NSERC Canada.


\end{document}